\newtheorem{thm}{Theorem}[section]
\newtheorem{cor}[thm]{Corollary}
\newtheorem{lemma}[thm]{Lemma}
\newtheorem{prop}[thm]{Proposition}
\theoremstyle{definition}
\newtheorem{defn}[thm]{Definition}
\newtheorem{remark}[thm]{Remark}
\newcommand{\bb}[1]{\mathbb{#1}}
\newcommand{\cl}[1]{\mathcal{#1}}
\newcommand{\rom}[1]{\mathrm{#1}}
\newcommand{\ep}{\varepsilon}
\newcommand{\FORAL}{\:\text{ for all }\:}
\newcommand{\qand}{\quad\text{and}\quad}
\newcommand{\qfor}{\quad\text{for}\quad}
\newcommand{\ip}[1]{\langle #1 \rangle}
\newcommand{\Bip}[1]{\Big\langle #1 \Big\rangle}
\newcommand{\Dim}{\operatorname{dim}}
\newcommand{\id}{{\operatorname{\textsl{id}\mskip2mu}}}
\newcommand{\re}{{\operatorname{Re}}}
\newcommand{\spn}{{\operatorname{span}}}
\renewcommand{\Re}{\mathop{\rm{Re}}\nolimits}
\newenvironment{sbmatrix}{\left[\begin{smallmatrix}}{\end{smallmatrix}\right]}
\newcommand{\OMIN}{{\operatorname{OMIN}}}
\newcommand{\OMAX}{{\operatorname{OMAX}}}
\DeclareMathOperator{\UCP}{UCP}
\newcommand{\UkP}{\ifmmode {\textnormal{U}k\textnormal{P}} \else {U$k$P}\fi}
\newcommand{\kmin}{\mathop{k\mbox{-}\hspace{-0.2ex}\mathrm{min}}}
\newcommand{\kmax}{\mathop{k\mbox{-}\hspace{-0.2ex}\mathrm{max}}}
\begin{document}
%%%%%%%%%%%%%%%%%%%%%%

\title[Exactness and LLP]{Exactness and LLP Results \\via Operator System Methods}
\author[K.~R.~Davidson]{Kenneth R.~Davidson}
\address{Department of Pure Mathematics, University of Waterloo,
Waterloo, ON, Canada  N2L 3G1}
\email{krdavids@uwaterloo.ca}
\author[V.~I.~Paulsen]{Vern I.~Paulsen}
\address{Institute for Quantum Computing and Department of Pure Mathematics, University of Waterloo,
Waterloo, ON, Canada  N2L 3G1}
\email{vpaulsen@uwaterloo.ca}
\author[M.~Rahaman]{Mizanur Rahaman}
\address{Department of Mathematical Sciences and Wallenberg Centre for Quantum Technology, Chalmers University of Technology}
\email{mizanurr@chalmers.se}
%\thanks{}
\keywords{operator systems, exactness, lifting property, reduced group C*-algebra}
\subjclass[2020]{Primary 46L07, 47A12; Secondary 43A07}

\begin{abstract}

In this paper, we employ operator system techniques to investigate structural properties of C*-algebras. In particular, we provide more direct proofs of results concerning exactness and the local lifting property (LLP) of group C$^*$-algebras that avoid relying on the traditional heavy machinery of C$^*$-algebra theory. Briefly, these methods allow us to deduce that any C*-algebra containing $n$ unitaries whose {\it joint numerical radius}, in the sense defined by \cite{FKP},  is strictly less than $n$, must fail certain of these properties.
\end{abstract}

\maketitle

%%%%%%%%%%%%%%%%%%%%%%
\section{Introduction}
%%%%%%%%%%%%%%%%%%%%%%

The goal of these notes is to provide some, arguably, more direct proofs of results concerning exactness, the local lifting property(LLP), and hyperlinearity for discrete groups via operator system methods. In many ways the proofs in the works of Kirchberg and Wasserman introduced and relied upon many operator system ideas. But at that time there was not a developed tensor theory for operator systems. 
In the intervening years, many of their ideas were reworked using results and ideas from the tensor theory of operator spaces. 
One of the primary goals of the introduction of the tensor theory of operator systems was to see if it wasn't a more natural setting for many of the questions in C*-algebra theory and here we revisit Kirchberg's and Wasserman's works through this more developed theory in the operator system category. 

We begin by recalling some definitions and results from the operator system literature.

An element $p$ of an operator system is {\it strictly positive} if there is $\epsilon >0$ so that. $p - \epsilon 1$ is positive. 
A unital completely positive map $\phi: \cl S \to \cl T$ from one operator system onto another is called a {\it quotient map} provided that every strictly positive element of $\cl T$ has a positive pre-image in $\cl S$. 
The map is called a {\it complete quotient map} provided that $\phi_n: M_n(\cl S) \to M_n(\cl T)$ is a quotient map for all $n$.

An operator system is {\it exact} provided that whenever $\cl A$ is a unital C*-algebra and $J \subseteq \cl A$ is a two-sided ideal, the map 
$\cl S \otimes_{min} \cl A \to \cl S \otimes_{min} (\cl A/J)$ is a complete quotient map.

By a result of Kavruk, just as for C*-algebras, the property of being exact passes to subobjects:

\begin{prop}\cite[Proposition~4.10]{Ka1} \label{Kavruk-prop}
Let $\cl T$ be an exact operator system and let $\cl S \subseteq \cl T$ be a suboperator system.  If $\cl T$ is exact, then $\cl S$ is exact.
\end{prop}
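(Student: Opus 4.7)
The plan is to exploit the well-known injectivity of the minimal operator system tensor product: whenever $\iota \colon \cl{R} \hookrightarrow \cl{R}'$ is a complete order embedding and $\cl{B}$ is any operator system, $\iota \otimes \id_{\cl{B}} \colon \cl{R} \otimes_{\min} \cl{B} \hookrightarrow \cl{R}' \otimes_{\min} \cl{B}$ is again a complete order embedding. Apply this both with $\cl{B} = \cl{A}$ and with $\cl{B} = \cl{A}/J$ (where $q \colon \cl{A} \to \cl{A}/J$ is the quotient) to obtain a commutative square
\[
\begin{array}{ccc}
\cl{S} \otimes_{\min} \cl{A} & \xrightarrow{\id_{\cl{S}} \otimes q} & \cl{S} \otimes_{\min} (\cl{A}/J) \\
\big\downarrow \iota_{1} & & \big\downarrow \iota_{2} \\
\cl{T} \otimes_{\min} \cl{A} & \xrightarrow{\id_{\cl{T}} \otimes q} & \cl{T} \otimes_{\min} (\cl{A}/J)
\end{array}
\]
in which both vertical arrows are unital complete order embeddings. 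Passing to $M_{n}$ preserves this structure.

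Given a strictly positive $x \in M_{n}(\cl{S} \otimes_{\min} (\cl{A}/J))$, say $x - \epsilon I_{n} \geq 0$, the image $\iota_{2}^{(n)}(x)$ is strictly positive in $M_{n}(\cl{T} \otimes_{\min} (\cl{A}/J))$. Exactness of $\cl{T}$ then produces a positive lift $Y \in M_{n}(\cl{T} \otimes_{\min} \cl{A})$ with $(\id_{\cl{T}} \otimes q)^{(n)}(Y) = \iota_{2}^{(n)}(x)$.

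The main obstacle is to force the positive lift $Y$ to live in the subspace $\iota_{1}^{(n)}(M_{n}(\cl{S} \otimes_{\min} \cl{A}))$: a priori there is no reason the lift provided by exactness of $\cl{T}$ should only use the coordinates from $\cl{S}$. To overcome this, the plan is to combine $Y$ with an algebraic lift. Since the algebraic surjection $\cl{S} \otimes \cl{A} \twoheadrightarrow \cl{S} \otimes (\cl{A}/J)$ is onto, one can approximate $x$ in the min norm by algebraic elements $x_{k} \in M_{n}(\cl{S} \otimes (\cl{A}/J))$ and select (not necessarily positive) lifts $\tilde{x}_{k} \in M_{n}(\cl{S} \otimes \cl{A})$. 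The difference $Y - \iota_{1}^{(n)}(\tilde{x}_{k})$ lies in the closed kernel of the bottom horizontal map, which in the min tensor product is controlled by an approximate unit $\{e_{\lambda}\}$ for $J$. One then perturbs $\tilde{x}_{k}$ by a convex combination involving $1 - e_{\lambda}$ applied in the $\cl{A}$-factor, so that the corrected element lies in $M_{n}(\cl{S} \otimes_{\min} \cl{A})$ and, thanks to the strict positivity $x \geq \epsilon I_{n}$ (which can absorb arbitrarily small self-adjoint perturbations), is itself positive. Taking a suitable limit produces the desired positive preimage of $x$ inside $M_{n}(\cl{S} \otimes_{\min} \cl{A})$, and since $n$ was arbitrary this shows $\id_{\cl{S}} \otimes q$ is a complete quotient map. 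The delicate part is the approximate-unit perturbation argument, which is where I expect the real work to lie.
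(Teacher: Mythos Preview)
The paper does not prove this proposition; it merely records the statement and cites \cite[Proposition~4.10]{Ka1}, so there is no in-paper argument to compare against.

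Your plan correctly isolates the real obstacle --- the positive lift $Y$ coming from exactness of $\cl T$ has no reason to lie in $M_n(\cl S\otimes_{\min}\cl A)$ --- and the approximate-unit idea is the right tool. But the ``convex combination involving $1-e_\lambda$'' step is too vague to be a proof, and aiming for a \emph{positive} preimage directly makes the bookkeeping harder than necessary. The clean way to finish is to switch to the equivalent $1$-exactness formulation from \cite{KPTT1}: it suffices to show that $\mathrm{dist}\big(z,\overline{\cl S\otimes J}\big)=\mathrm{dist}\big(z,\overline{\cl T\otimes J}\big)$ for every $z\in M_n(\cl S\otimes_{\min}\cl A)$, since the right-hand side equals $\|(\id\otimes q)^{(n)}(z)\|$ by exactness of $\cl T$ together with injectivity of $\min$. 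After embedding $\cl T$ in a C*-algebra so that multiplication by $1\otimes e_\lambda$ is available, one has $z(1\otimes e_\lambda)\in\overline{\cl S\otimes J}$, while $\limsup_\lambda\|z(1\otimes(1-e_\lambda))\|\le\|z-w\|$ for every $w\in\overline{\cl T\otimes J}$; this yields the distance equality and hence exactness of $\cl S$, with no need for the lift $Y$ at all. If instead you pursue your outline and compress $\tilde x_k$ by $(1\otimes(1-e_\lambda)^{1/2})$ on each side, the resulting element is only \emph{approximately} positive and only lifts an \emph{approximation} of $x$, so you still end up needing the $1$-exactness equivalence to close the argument.
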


An operator system $\cl S$ is said to have the {\it lifting property(LP)} if whenever $\phi: \cl S \to \cl A/J$ is a unital completely positive map(UCP), there exists a {\it UCP lift}, i.e., a UCP map $\psi: \cl S \to \cl A$ such that $\pi \circ \psi = \phi$ where $\pi: \cl A \to \cl A/J$ is the quotient *-homomorphism.  An operator system is said to have the {\it local lifting property(LLP)} if whenever $\phi: \cl S \to \cl A/J$ is UCP and $\cl T \subseteq \cl S$ is a finite dimensional operator subsystem, then the restriction of $\phi$ to $\cl T$ has a UCP lift.

 Kavruk \cite[Theorem~7.4]{Ka1} also proves that a finite dimensional operator system has the LP if and only if every map into the Calkin algebra, $Q(\ell^2)$, lifts, i.e., it is enough to replace arbitrary quotients by this particular quotient.

Given an operator system $\cl S$ the dual space $\cl S^d$ of bounded linear functionals has a natural family of positive cones in $M_n(\cl S^d)$, by defining $(f_{i,j}) \in M_n(\cl S^d)^+$ if and only if the map $\phi: \cl S \to M_n$ defined by
\[
 \phi(x) = (f_{i,j}(x)) \in M_n,
\]
is completely positive. When $\cl S$ is finite dimensional, Choi-Effros (corollary 4.5 in \cite{ChoiEff}) proved that there always exists a {\it strictly positive} linear functional, i.e., a functional $f: \cl S \to \bb C$ such that 
\[
 p \in \cl S^+, \,\, p \ne 0 \implies f(p) >0,
\]
and that any such functional satisfies the axioms to be an Archimedean order unit for $\cl S^d$, so that $(\cl S^d, f)$ satisfy the axioms to be an abstract operator system. Thus, the duals of finite dimensional operator systems are again operator systems, albeit non-canonically, since it depends on the choice of an order unit.

Fortunately, properties like exactness and LLP turn out to be independent of this choice of order unit.

Let $\cl A$ be a unital C$^*$-algebra and let $\cl S \subseteq \cl A$ be an operator system.  If the C*-algebra generated by the unitary elements in $\cl S$ is $\cl A$, then \cite{FKPT1} say that $\cl S$ {\it contains enough unitaries}.

It is convenient to introduce the following parameters associated to an operator system $\cl S$
\[
 r_k(\cl S) := \sup \{ \| \phi \|_{cb} : \phi: \cl T \to \cl S \,\,\, \UkP \},
\]
 and
 \[
  d_k(\cl S) := \sup \{ \| \phi \|_{cb} : \phi: \cl S \to \cl T \,\,\, \UkP \},
 \]
 where the supremum ranges over all operator systems $\cl T$ and all unital $k$-positive maps ( {\UkP}) maps $\phi$. It is helpful to think as $r$ and $d$ as standing for range and domain. Each sequence $(r_k(\cl S))_k$ and $(d_k(\cl S))_k$ is nonincreasing. For an operator system $\cl S$, we also consider the limits
\[
 r_{\infty} (\cl S) = \lim_{k \to \infty} r_k(\cl S) 
\]
and
\[
 d_{\infty} (\cl S) = \lim_{k \to \infty} d_k(\cl S). 
\]
We write down an important result that we will be using many time throughout the paper. 
 
\begin{thm} \label{exact-equivalences}
Let $\cl S \subseteq \cl A$ be a finite dimensional operator system that contains enough unitaries. Then the following are equivalent:
\begin{enumerate}
\item $\cl A$ is exact,
\item $\cl S$ is exact,
\item $r_{\infty}(\cl S) =1$,
\item $\cl S^d$ has the LP,
\item $d_{\infty}(\cl S^d) =1$.
\end{enumerate}
\end{thm}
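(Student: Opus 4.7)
\medskip
\noindent\textbf{Proof proposal.}
My plan is to split the theorem into two halves: the equivalences $(2) \Leftrightarrow (3) \Leftrightarrow (4) \Leftrightarrow (5)$ hold for any finite-dimensional operator system, using only duality and the tensor theory of operator systems, while $(1) \Leftrightarrow (2)$ is where the enough-unitaries hypothesis plays the decisive role. I would first dispatch the operator-system equivalences and then concentrate the real work on $(2) \Rightarrow (1)$.

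For the operator-system half, I proceed in four short steps. The direction $(1) \Rightarrow (2)$ is immediate from Proposition~\ref{Kavruk-prop}. For $(2) \Leftrightarrow (3)$ I would invoke the characterization, due to Kavruk, that $\cl S$ is exact iff the minimal and the enveloping-minimal ($\operatorname{el}$) tensor products agree on $\cl S$; since the $\operatorname{el}$ matrix norms are computed via UkP maps into $\cl S$, a limiting argument in $k$ gives equivalence with $r_\infty(\cl S) = 1$. For $(3) \Leftrightarrow (5)$, I would dualize UkP maps: on finite-dimensional operator systems, the transpose of a UkP map $\phi : \cl T \to \cl S$ is, after matching Archimedean order units, a UkP map $\phi^d : \cl S^d \to \cl T^d$ with the same cb norm, so $r_k(\cl S) = d_k(\cl S^d)$ for each $k$, and the equivalence follows by taking limits. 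Finally, $(4) \Leftrightarrow (5)$ is the dual characterization of LP, namely that LP corresponds to the agreement of the maximal and enveloping-right ($\operatorname{er}$) tensor products, which translates to $d_\infty(\cl S^d) = 1$.

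The new content lies in $(2) \Rightarrow (1)$. Given any unital C*-algebra $\cl B$ with ideal $J$, exactness of $\cl S$ produces positive preimages in $\cl S \otimes_{\min} \cl B$ for strictly positive elements of $\cl S \otimes_{\min}(\cl B/J)$. To propagate this from $\cl S$ to $\cl A$, I would exploit the enough-unitaries hypothesis as follows: a UCP map on $\cl S$ that sends the distinguished unitaries to unitaries extends uniquely to a $*$-homomorphism on $\cl A$, so the min tensor norm on $\cl A \otimes \cl B$ is controlled, via representations evaluated on unitaries, by the min tensor norm on $\cl S \otimes \cl B$. Combined with Arveson extension, this dictionary between UCP maps on $\cl S$ and $*$-representations of $\cl A$ should allow one to assemble the required positive preimage in $\cl A \otimes_{\min} \cl B$. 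The principal obstacle is precisely this last step: one must verify that the abstract positive lifts furnished by exactness of $\cl S$ are compatible with the multiplicative extension to $\cl A$, so that they define genuine positive elements after the passage from the operator-system tensor product to the C*-algebraic one. This compatibility is the technical heart of the Kirchberg--Wasserman-style results, and is the point where the operator-system machinery has to do real work rather than merely translate definitions.
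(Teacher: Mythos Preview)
Your overall architecture matches the paper's exactly: the paper's proof is nothing more than a list of citations, handling $(1)\Rightarrow(2)$ via Proposition~\ref{Kavruk-prop}, $(2)\Rightarrow(1)$ via \cite[Corollary~9.6]{KPTT1} or \cite[Proposition~10.12]{Ka1}, $(2)\Leftrightarrow(3)$ and $(4)\Leftrightarrow(5)$ via \cite{ADHPR}, and $(2)\Leftrightarrow(4)$ via \cite[Theorem~6.6]{Ka1}. Your sketches of the operator-system equivalences $(2)\Leftrightarrow(3)$, $(3)\Leftrightarrow(5)$, and $(4)\Leftrightarrow(5)$ are correct outlines of exactly what those references do, so on that half you are simply unpacking the citations.

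Where you diverge from the paper is in attempting an actual argument for $(2)\Rightarrow(1)$ rather than citing it. Here there is a genuine gap in your sketch. You propose to lift strictly positive elements of $\cl A\otimes_{\min}(\cl B/J)$ by first invoking exactness of $\cl S$ and then ``propagating'' via the UCP-to-$*$-homomorphism extension and Arveson. The difficulty you flag is real, but the mechanism you suggest does not bridge it: a strictly positive element of $\cl A\otimes_{\min}(\cl B/J)$ need not lie in $\cl S\otimes(\cl B/J)$, so exactness of $\cl S$ gives you nothing to extend. The argument in \cite{KPTT1} does not try to lift individual elements this way. Instead, the enough-unitaries hypothesis is used to show that the inclusion $\cl S\hookrightarrow\cl A$ induces complete order embeddings $\cl S\otimes_{\tau}\cl B\hookrightarrow\cl A\otimes_{\tau}\cl B$ for the relevant tensor norms (in particular for the quotient structure coming from $\cl B/J$), so that the complete-quotient property transfers from the $\cl S$ level to the $\cl A$ level structurally, not element by element. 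If you want to reprove $(2)\Rightarrow(1)$ rather than cite it, that is the route to take; otherwise, a citation suffices and is all the paper provides.
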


\begin{proof} 
(1) implies (2) since subsystems of exact C*-algebras are exact by the  proposition \ref{Kavruk-prop}. 
(2) implies (1)  is \cite[Corollary~9.6]{KPTT1} or \cite[Proposition~10.12]{Ka1}.  
The equivalence of (2) and (3) and of (4) and (5) is in \cite{ADHPR}.  
The equivalence of (2) and (4) is \cite[Theorem~6.6]{Ka1}.
\end{proof}

Let $G$ be a finitely generated group with generators $u_1,\dots,u_n$ and let
\[ 
\cl S(\text{u}) = \spn \{ e, u_1,\dots,u_n, u_1^*,\dots, u_n^* \} \subseteq C^*(G),
\]
and 
\[
 \cl S_{\lambda}(\text{u}) = \spn \{ 1, \lambda(u_1),\dots, \lambda(u_n), \lambda(u_1)^*,\dots, \lambda(u_n)^* \} \subseteq C^*_{\lambda}(G).
\]
Then both of these sets contain enough unitaries, so the above theorem applies to both of these cases.

When $G = \bb F_n$ the free group on $n$ generators and we let $u_1,\dots,u_n \in \bb F_n$ then $\cl S(\text{u})$ is denoted $\cl S_n$.  If we let $\cl U_n \subseteq M_2 \oplus \dots M_2$($n$ copies) be the operator system spanned by the identity $I = I_2 \oplus \cdots \oplus I_2$ and the matrix units $E_{1,2,j}, E_{2,1,j}$ where the $j$ indicates that they are in the $j$-copy of $M_2$, then it is proven in \cite{FP} that $\cl U_n$ is completely order isomorphic to $\cl S_n^d$. 

Using the explicit form of the operator system dual, we can say the following:

\begin{thm}\label{F_n-exact-equivalences} The following are equivalent:
\begin{enumerate}
\item $C^*(\bb F_n)$ is exact,
\item $\cl S_n$ is exact, 
\item $r_{\infty}(\cl S_n) =1$
\item $\cl U_n$ has LP,
\item $d_{\infty}(\cl U_n) =1$,
\item if $X_1,\dots,X_n \in \cl A/J$ satisfy
\[ w(U_1 \otimes X_1 + \cdots U_n \otimes X_n) \le 1, \forall U_i \text{ unitary matrices },\]
then there exist $T_i \in \cl A, \, \pi(T_i) = X_i$ that satisfy the same inequality, where $\pi: \cl A \rightarrow \cl A/J$ is the quotient map.
\end{enumerate}
\end{thm}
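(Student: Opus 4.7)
The plan is to split the proof into two parts. For the equivalences among (1)--(5), apply Theorem~\ref{exact-equivalences} to the operator system $\cl S_n \subseteq C^*(\bb F_n)$. Since the generators $u_1,\ldots,u_n$ lie inside $\cl S_n$ and generate $C^*(\bb F_n)$ as a C*-algebra, $\cl S_n$ contains enough unitaries, so Theorem~\ref{exact-equivalences} gives the equivalence of (1), (2), (3) together with the statements that $\cl S_n^d$ has the LP and $d_\infty(\cl S_n^d)=1$. The Farenick--Paulsen identification $\cl S_n^d \cong \cl U_n$ from \cite{FP} then rewrites these last two statements as (4) and (5).

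The real content is (4) $\Leftrightarrow$ (6), and the key step is a dictionary between UCP maps $\phi:\cl U_n \to \cl B$ (for $\cl B$ any unital C*-algebra) and $n$-tuples $(X_1,\ldots,X_n) \in \cl B^n$ constrained by a joint numerical radius inequality, via $X_j=\phi(E_{1,2,j})$. To derive the constraint, first describe the positive cone of $M_N(\cl U_n)$: since $\cl U_n \subseteq \bigoplus_{j=1}^n M_2$, a self-adjoint element
\[
A \otimes I + \sum_{j=1}^n \bigl( B_j \otimes E_{1,2,j} + B_j^* \otimes E_{2,1,j} \bigr)
\]
is positive in $M_N(\cl U_n)$ iff each block $\begin{pmatrix} A & B_j \\ B_j^* & A \end{pmatrix}$ is positive in $M_{2N}$. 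Conjugating by $A^{-1/2}$ (approximating when $A$ is singular) reduces positivity of $\phi_N$ to the case $A=I_N$, where the hypothesis becomes $\|B_j\|\le 1$ and the conclusion becomes $I + 2\re\bigl(\sum_j B_j \otimes X_j\bigr)\ge 0$. Absorbing a scalar phase into the $B_j$ and taking a supremum rewrites this as $w\bigl(\sum_j B_j \otimes X_j\bigr) \le 1/2$; since $w$ is convex and the unit ball of $M_N$ is the closed convex hull of its unitaries, the supremum over contractions $B_j$ equals the supremum over unitaries $U_j$. After rescaling $X_j \mapsto 2X_j$ to match the normalization in (6), $\phi$ is UCP iff $w\bigl(\sum_j U_j \otimes X_j\bigr) \le 1$ for all unitary matrices $U_j$ of every size.

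With the dictionary in place, (4) $\Leftrightarrow$ (6) is a direct translation: LP of $\cl U_n$ says that every UCP map $\cl U_n \to \cl A/J$ admits a UCP lift to $\cl A$, which via the dictionary is exactly the statement that every tuple in $\cl A/J$ with joint numerical radius at most $1$ lifts to a tuple in $\cl A$ with the same bound and the correct image under $\pi$. The main obstacle I anticipate is the correct description of the positive cone of $M_N(\cl U_n)$ together with the careful bookkeeping of normalization constants through the Farenick--Paulsen identification, the reduction to $A=I_N$, and the final Russo--Dye step; once these are in place, (4) $\Leftrightarrow$ (6) follows immediately.
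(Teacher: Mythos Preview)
Your proposal is correct and follows essentially the same route as the paper: the equivalences (1)--(5) come from Theorem~\ref{exact-equivalences} together with the identification $\cl S_n^d \cong \cl U_n$ from \cite{FP}, and (4)$\Leftrightarrow$(6) is the dictionary between UCP maps $\cl U_n \to \cl B$ and $n$-tuples with joint numerical radius $\le 1$. The only difference is that the paper simply cites \cite{FKPT1} for this dictionary, whereas you supply a direct argument---which is in fact the content of Lemma~\ref{L:ucp from wcb}, proved later in the paper by the same positive-cone and $A^{-1/2}$-conjugation method you outline.
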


\begin{proof} 
The equivalence of (1)--(5) follow from the above.  
The equivalence of (4) and (6) follows from the analysis of the operator system $\cl U_n$ in \cite{FKPT1}.  

In particular, they prove that a map $\phi: \cl U_n \to \cl A$ is UCP if and only if $\phi(1) = 1$ and 
\[
 w(\sum_k U_k \otimes \phi(E_{1,2,k})) \le 1/2.
\]
So if we set $X_k = 2 \phi(E_{1,2,k})$ we obtain (6).
\end{proof}

S. Wasserman\cite{Wa} proved that $C^*(\bb F_n)$ is not exact.  So by Wasserman's result we know that (6) fails. 
On the other hand, this theorem gives us a new means to show non-exactness.

One goal of this paper is to show directly, using an idea that goes back to Haagerup \cite{Ha}, that (6) fails, thus giving a new proof of Wasserman's result.
Moreover, similar numerical radius calculations will allow us to deduce that for any finitely generated non-abelian group $G$, 
either $G$ is not hyperlinear or $\cl S_{\lambda}(\text{u})$ fails to have the LP, where $\text{u}$ denotes the generating set. 
This result should be compared with a result of Ozawa \cite{Ozawa-QWEP} that either $C^*_{\lambda}(G)$ fails the local lifting property or fails to be QWEP. 
In contrast, Ozawa's result but uses a number of deep theorems.

Haagerup \cite{Ha} studied the analogous lifting problem to (6) but for the norm instead of the numerical radius. 
He proved that for any $n \ge 3$ there exists a quotient $\cl A/J$ and a completely contractive map $\phi: \ell^{\infty}_n \to \cl A/J$ 
such that any lifting $\psi: \ell^{\infty}_n \to \cl A$ has $\| \psi \|_{cb} \ge \frac{n}{2 \sqrt{n-1}}.$
If we let $e_i, 1 \le i \le n$ denote the canonical basis for $\ell^{\infty}_n$ and set $X_i = \phi(e_i)$, then these will satisfy the first part of (6), but for any lifting, setting $T_i = \psi(e_i)$, using the fact that $w(T) \ge \|T\|/2$ we will have that
\[
 \sup \{ w(U_1 \otimes T_1 + \cdots + U_n \otimes T_n): U_i \text{ unitary } \} \ge \|\psi\|_{cb}/2 \ge \frac{n}{4 \sqrt{n-1}} \ge 1,
\]
for any $n \ge 15$. Thus, using the above theorem, we see that Haagerup's calculations yield that $C^*(\bb F_n)$ is not exact for any $n \ge 15$. 
From this fact, standard techniques allow one to deduce that $C^*(\bb F_n)$ is not exact for all $n \ge 2$. 

Haagerup also showed that for $n=2$ any completely contractive mapping from $ \ell^{\infty}_2$ into a quotient has a completely contractive lifting. 
This is now known to be a consequence of the fact that the normed space $\ell^{\infty}_2$ has a unique operator space structure. 
For $n \ge 3$ it is known that $\ell^{\infty}_n$ has many different operator space structures. 
Since we know that (6) fails even for $n=2$, this shows that there is a distinctive difference between norm liftings and numerical radius liftings. 

In the next section we show that the failure of numerical radius liftings and the fact that $d_{\infty}(\cl U_n) >1$, and consequently that $C^*(\bb F_n)$ is not exact, are consequences of a classical result of Kesten combined with a $2\times2$ matrix trick.

%%%%%%%%%%%%%%%%%%%%%%
\section{The Joint Numerical Radius}
%%%%%%%%%%%%%%%%%%%%%%

Recall that given an element $T \in B(\cl H)$ the {\it numerical radius of T} is the quantity
\[
 w(T) = \sup \big\{ |\langle Th , h \rangle | :  h \in \cl H, \,\, \|h \|=1 \big\}.
\]
If $a$ is an element of a unital C*-algebra $\cl A$ and $\pi: \cl A \to B(\cl H)$ is any faithful *-homomorphism, then
\[ w(\pi(a)) = \sup \{ |s(a)| : s \text{ a state on } \cl A \},\]
and this quantity is denoted $w(a)$.

The following definition of the joint numerical radius of a set of elements was introduced in \cite{FKP}.

\begin{defn} 
Let $a_1,\dots,a_n$ be elements of a C*-algebra $\cl A$.  We set
\[
 w_k(a_1,\dots,a_n) = \sup \{ w(U_1 \otimes a_1 + \cdots + U_n \otimes a_n) : U_i \in M_k, \,\, \|U_i \| \le 1 \},
\]
and set
\[ 
w_{cb}(a_1,\dots,a_n) = \sup \{ w_k(a_1,\dots,a_n) : k \in \bb N \}.
\]
\end{defn}

We remark that the definition is unchanged if instead we assume that the $U_i$'s are all unitary matrices. 
In \cite{FKP}, $w_{cb}(a_1,\dots,a_n)$ is denoted as $w(a_1,\dots,a_n)$.

\begin{prop} \label{wcb=wsum} 
Let $G$ be a discrete group and let $g_1,\dots,g_n \in G$ and $\alpha_i \in \bb C$  Then
\begin{align*}
 w_{cb}(\alpha_1\lambda(g_1), \dots, \alpha_n\lambda(g_n)) &= w_1(\alpha_1 \lambda(g_1),\dots, \alpha_n \lambda(g_n)) \\
 &= w(|\alpha_1|\lambda(g_1)+ \cdots + |\alpha_n|\lambda(g_n)). 
\end{align*}
\end{prop}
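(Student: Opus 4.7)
The plan is to prove both equalities in one stroke by showing that, for any $k\in\bb N$, any contractions $U_1,\dots,U_n\in M_k$, and any unit vector $\xi\in\bb C^k\otimes\ell^2(G)$, there is a single unit vector $\tilde\xi\in\ell^2(G)$ with
\[
\Big|\Big\langle\textstyle\sum_i U_i\otimes\alpha_i\lambda(g_i)\,\xi,\,\xi\Big\rangle\Big|\;\le\;\Big\langle\textstyle\sum_i|\alpha_i|\lambda(g_i)\,\tilde\xi,\,\tilde\xi\Big\rangle.
\]
Taking suprema will give $w_{cb}(\alpha_1\lambda(g_1),\dots,\alpha_n\lambda(g_n))\le w(\sum_i|\alpha_i|\lambda(g_i))$. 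The reverse chain $w(\sum_i|\alpha_i|\lambda(g_i))\le w_1\le w_{cb}$ is trivial, since in the scalar case $k=1$ one may choose unimodular $z_i\in\bb C$ with $z_i\alpha_i=|\alpha_i|$ so that $\sum_i z_i\otimes\alpha_i\lambda(g_i)=\sum_i|\alpha_i|\lambda(g_i)$.

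To construct $\tilde\xi$, I would write $\xi=\sum_{s=1}^k e_s\otimes\eta_s$ with $\eta_s\in\ell^2(G)$, and set $\tilde\xi(g):=\bigl(\sum_s|\eta_s(g)|^2\bigr)^{1/2}$; interchanging summations gives $\|\tilde\xi\|_2^2=\sum_s\|\eta_s\|_2^2=1$. For the main estimate, I would expand
\[
\Big\langle\textstyle\sum_i U_i\otimes\alpha_i\lambda(g_i)\,\xi,\,\xi\Big\rangle\;=\;\sum_i\alpha_i\sum_{g\in G}\langle U_i w^{g,i},\,v^g\rangle_{\bb C^k},
\]
where $v^g=(\eta_s(g))_{s=1}^k$ and $w^{g,i}=(\eta_s(g_i^{-1}g))_{s=1}^k$. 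Since $\|U_i\|\le 1$, Cauchy--Schwarz in $\bb C^k$ yields $|\langle U_i w^{g,i},v^g\rangle|\le\|w^{g,i}\|\,\|v^g\|=\tilde\xi(g_i^{-1}g)\tilde\xi(g)$, and summing over $g$ recovers exactly $\langle\lambda(g_i)\tilde\xi,\tilde\xi\rangle$ (using that $\tilde\xi$ is real and non-negative). Combining with $|\alpha_i|$ and the triangle inequality delivers the required bound, and since $\tilde\xi$ is a unit vector the right-hand side is at most $w(\sum_i|\alpha_i|\lambda(g_i))$.

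I do not expect a substantive obstacle beyond careful bookkeeping of indices so that the matrix entries of $U_i$ interact correctly with the group translation in $\lambda(g_i)$. The vector $\tilde\xi$ is the classical pointwise absolute value used routinely to reduce numerical-radius computations in $C^*_\lambda(G)$ to positive-valued vectors, and the remaining content is a fibrewise Cauchy--Schwarz indexed by $G$. In particular, no appeal to amenability of $G$ or to Kesten-type spectral radius estimates is needed for this equality.
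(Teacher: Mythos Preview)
Your proof is correct and is essentially the same as the paper's: both write a unit vector in $\bb C^k\otimes\ell^2(G)$, replace it by the nonnegative $\ell^2(G)$-vector obtained from the fibrewise norms over $G$, and apply Cauchy--Schwarz in $\bb C^k$ together with $\|U_i\|\le 1$ to bound the inner product by $\bip{\sum_i|\alpha_i|\lambda(g_i)\,\tilde\xi,\tilde\xi}$. The only difference is cosmetic---the paper expands $v=\sum_g v_g\delta_g$ with $v_g\in\bb C^k$ and sets $u=\sum_g\|v_g\|\delta_g$, which is exactly your $\tilde\xi$ viewed through the other tensor leg.
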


\begin{proof} 
Let $\alpha_i = \omega_i |\alpha_i|$ with $|\omega_i| =1$ and setting $U_i= \overline{\omega_i}$, shows $w(|\alpha_1| \lambda(g_1) + \cdots + |\alpha_n| \lambda(g_n)) \le w_1( \alpha_1 \lambda(g_1), \ldots, \alpha_n \lambda(g_n)$.

Given contractions $U_1,\dots, U_n \in M_k$ and a unit vector $v=\sum_g v_g \delta_g \in \bb C^k \otimes \ell^2(G)$, we have that
\begin{align*} 
 \big| \big\langle \big( \sum_i U_i \otimes \alpha_i \lambda(g_i)\big) v , v \big\rangle \big| 
 &\le \sum_{i,g,h} |\alpha_i| |\langle U_i v_g , v_h \rangle | \langle \delta_{g_ig} , \delta_h \rangle \\&
 \le \sum_{i,g} |\alpha_i| \|v_g \| \| v_{g_ig} \| 
 = \big\langle \big( \sum_i |\alpha_i| \lambda(g_i) \big) u , u \big\rangle \\&
  \le w(|\alpha_1| \lambda(g_1) + \cdots + |\alpha_n| \lambda(g_n)), 
\end{align*}
where $u = \sum_g \|v_g\| \delta_g \in \ell^2(G),$
which shows 
\[
 w_{cb}(\alpha_1 \lambda(g_1),\dots, \alpha_n \lambda(g_n)) \le w(|\alpha_1| \lambda(g_1) + \cdots |\alpha_n| \lambda(g_n)).
 \qedhere
\]
\end{proof}

Recall that for an operator $A$ that $\Re A = (A+A^*)/2$ is its real part.

\begin{prop} \label{wcb=wre} 
Let $G$ be a discrete group and let $g_1,\dots,g_n \in G$.  
If $\alpha_i \ge0$, then for $A = \sum_{i=1}^n \alpha_i \lambda(g_i)$, 
\[
 w_{cb}( \alpha_1 \lambda(g_1),\dots, \alpha_n \lambda(g_n)) = w(A) = w(\Re A) = \| \Re A \| .
\]
\end{prop}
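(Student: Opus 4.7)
The plan is to chain the four equalities using Proposition~\ref{wcb=wsum} twice, once directly and once on a cleverly doubled tuple, together with two elementary numerical-radius facts. Since $\alpha_i \ge 0$, Proposition~\ref{wcb=wsum} with $|\alpha_i|=\alpha_i$ immediately gives
\[ w_{cb}(\alpha_1\lambda(g_1),\dots,\alpha_n\lambda(g_n)) = w(A), \]
which is the first equality. The third equality $w(\Re A)=\|\Re A\|$ is automatic since $\Re A$ is self-adjoint. Moreover the pointwise estimate $|\langle \Re A\, h,h\rangle| = |\Re\langle Ah,h\rangle| \le |\langle Ah,h\rangle|$ yields $w(\Re A) \le w(A)$, so it remains only to prove the reverse inequality $w(A) \le \|\Re A\|$.

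For this I would invoke the standard identity
\[ w(A) = \sup_{\theta \in \bb R} \|\Re(e^{i\theta}A)\|, \]
which follows at once from $|z| = \sup_\theta \Re(e^{i\theta}z)$ for $z \in \bb C$, and reduce the task to showing $\|\Re(e^{i\theta}A)\| \le \|\Re A\|$ for every $\theta$. The key trick is to apply Proposition~\ref{wcb=wsum} to the doubled $2n$-tuple
\[ (\alpha_1\lambda(g_1),\dots,\alpha_n\lambda(g_n),\,\alpha_1\lambda(g_1^{-1}),\dots,\alpha_n\lambda(g_n^{-1})), \]
whose coefficients are still nonnegative. That proposition gives
\[ w_{cb}(\text{doubled tuple}) = w(A + A^*) = \|A+A^*\| = 2\|\Re A\|, \]
the middle equality using self-adjointness of $A+A^*$.

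Now feeding the scalar $1\times 1$ contractions $U_i = e^{i\theta}$ for $i\le n$ and $U_i = e^{-i\theta}$ for $i>n$ into the definition of $w_{cb}$ of the doubled tuple yields
\[ w\bigl(e^{i\theta}A + e^{-i\theta}A^*\bigr) \le 2\|\Re A\|; \]
since the operator inside is $2\Re(e^{i\theta}A)$ and is self-adjoint, this collapses to $\|\Re(e^{i\theta}A)\| \le \|\Re A\|$. Taking the supremum over $\theta$ and combining with the elementary steps above completes the chain of four equalities.

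I do not anticipate any real obstacle; the sole ingenuity is recognizing that doubling the generating tuple (to include inverses) and then specializing to scalar phases $e^{\pm i\theta}$ inside $w_{cb}$ is precisely what is needed to convert Proposition~\ref{wcb=wsum} into the bound $\sup_\theta \|\Re(e^{i\theta}A)\| \le \|\Re A\|$.
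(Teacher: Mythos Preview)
Your argument is correct, but it takes a genuinely different route from the paper. The paper repeats the vector-level trick already used inside Proposition~\ref{wcb=wsum}: for $v = \sum_g v_g \delta_g$ one passes to $u = \sum_g |v_g|\,\delta_g$, notes $|\ip{Av,v}| \le \ip{Au,u}$, and then observes that since all matrix entries of $A$ in the $\delta_g$-basis are nonnegative real, $\ip{Au,u}$ is real and hence equals $\ip{(\Re A)u,u}$. This gives $w(A) = w(\Re A)$ in one stroke.

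Your approach instead bootstraps from Proposition~\ref{wcb=wsum} itself, applied to the doubled $2n$-tuple $(\alpha_i\lambda(g_i),\alpha_i\lambda(g_i^{-1}))$, to conclude that $w_{cb}$ of that tuple equals $w(A+A^*)=2\|\Re A\|$; specializing to scalar phases then yields $\|\Re(e^{i\theta}A)\|\le\|\Re A\|$ for every $\theta$, and the standard identity $w(A)=\sup_\theta\|\Re(e^{i\theta}A)\|$ closes the loop. This is a nice structural derivation: it shows that Proposition~\ref{wcb=wre} is a formal consequence of Proposition~\ref{wcb=wsum} without reopening the $\ell^2(G)$ computation. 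The paper's argument is shorter and more transparent about \emph{why} the phase disappears (namely, the supremum in $w(A)$ is attained on nonnegative-coefficient vectors, where $\ip{A\,\cdot,\cdot}$ is automatically real), while yours avoids touching vectors at all and reuses machinery already in hand.
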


\begin{proof}
Observe that for a vector $v = \sum_g v_g \delta_g \in \ell^2(G)$, we have 
\[
 | \ip{ Av,v} | \le \ip{Au,u}
\]
where $u = \sum_g |v_g| \delta_g$. 
Because all of the coefficients are real,
\[
 \ip{Au,u} = \ip{u,A^*u} = \ip{A^*u,u} = \ip{ \Re A u,u}.
\]
Therefore with 
\[
 \Sigma = \big\{ u =  \sum_{g\in G} u_g \delta_g \in \ell^2(G),\ u_g \ge 0 ,\ \|u\|_2 = 1 \big\} ,
\]
we have
\begin{align*} 
 w(A) &= \sup_{u \in \Sigma} \ip{Au,u} = \sup_{u \in \Sigma} \ip{(\Re A ) u,u} 
 = w(\Re A) . 
\end{align*} 
Since $\Re A$ is self-adjoint, $w(\Re A) = \| \Re A \|$.
\end{proof}

\begin{prop} 
Let $g_1, \dots, g_n$ denote the standard generators of the free group $\bb F_n$. 
Then
\[
 w_{cb}(\lambda(g_1),\dots, \lambda(g_n)) =
 w\big( \sum_{i=1}^n \lambda(g_i) \big) = \sqrt{2n-1}.
\]
\end{prop}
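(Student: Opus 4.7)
The plan is to reduce the joint numerical radius computation to a norm computation for a self-adjoint convolution operator on $\ell^2(\bb F_n)$, and then invoke Kesten's classical result on the spectral radius of the simple random walk on the free group.

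First, I would apply Proposition \ref{wcb=wre} with $\alpha_1 = \cdots = \alpha_n = 1$ and $G = \bb F_n$. This immediately gives
\[
 w_{cb}(\lambda(g_1),\dots,\lambda(g_n)) = w(A) = \|\Re A\|,
\]
where $A = \sum_{i=1}^n \lambda(g_i)$. So the entire content of the proposition collapses to the single equality $\|\Re A\| = \sqrt{2n-1}$.

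Next, observe that
\[
 \Re A \;=\; \tfrac{1}{2}\sum_{i=1}^n \bigl( \lambda(g_i) + \lambda(g_i^{-1}) \bigr),
\]
which is (up to the normalization $\frac{1}{2n}$) precisely the transition operator of the simple symmetric random walk on the Cayley graph of $\bb F_n$ with respect to the generating set $\{g_1^{\pm 1},\dots,g_n^{\pm 1}\}$. By Kesten's theorem on the spectral radius of the random walk on the free group, the normalized operator $\frac{1}{2n}\sum_{i=1}^n (\lambda(g_i) + \lambda(g_i^{-1}))$ has norm $\frac{\sqrt{2n-1}}{n}$, and hence $\|\Re A\| = \sqrt{2n-1}$. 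This closes the chain of equalities.

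The ``hard'' step is Kesten's theorem itself, but since that is a classical input I would simply cite it (the combinatorial computation of return probabilities on $\bb F_n$, or equivalently the recursion for the resolvent that yields the spectral density supported on $[-2\sqrt{2n-1},2\sqrt{2n-1}]$). The only real work on the operator-system side is Proposition \ref{wcb=wre}, which has already been established and which is what allows the passage from the joint numerical radius $w_{cb}$ with matrix coefficients to the ordinary norm of a self-adjoint element of $C^*_\lambda(\bb F_n)$. Everything else is routine.
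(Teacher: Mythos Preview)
Your proof is correct and follows essentially the same approach as the paper: apply Proposition~\ref{wcb=wre} to reduce to $\|\Re A\|$, and then cite Kesten's computation $\big\|\sum_{i=1}^n \lambda(g_i)+\lambda(g_i^{-1})\big\| = 2\sqrt{2n-1}$ to finish.
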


\begin{proof}
It is a result of Kesten \cite{Kesten} that $\big\| \sum_{i=1}^n \lambda(g_i) + \lambda(g_i^{-1}) \big\| = 2\sqrt{2n-1}$.
Thus the result follows from Proposition~\ref{wcb=wre}. 
\end{proof}

\begin{lemma} \label{L:ucp from wcb}
Let $\cl A$ be a unital C*-algebra and let $x_1,\dots,x_n \in \cl A$. 
There is a UCP map $\phi: \cl U_n \to \cl A$ with $\phi(E_{1,2,i}) = x_i$ for $1 \le i \le n$
if and only if $w_{cb}(x_1,\dots,x_n) \le \frac12$,

Likewise the map $\phi$ is $k$-positive if and only if $w_k(x_1,\dots,x_n) \le \frac12$.
\end{lemma}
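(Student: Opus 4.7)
The plan is to unpack the matricial order structure of $\cl U_n$ directly from its realization as a subsystem of $M_2 \oplus \cdots \oplus M_2$, and then observe that the resulting positivity condition is, after a Schur complement reduction, exactly $w_k(x_1,\dots,x_n) \le 1/2$. Since $\phi$ is UCP iff it is $k$-positive for every $k$, and $w_{cb} = \sup_k w_k$, the UCP assertion follows from the $k$-positive assertion in the limit, so it suffices to prove the latter.

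First I would compute $M_k(\cl U_n)^+$. A self-adjoint element of $M_k(\cl U_n)$ has the form
\[
 S = \Lambda \otimes I + \sum_{m=1}^n \bigl(A_m \otimes E_{1,2,m} + A_m^* \otimes E_{2,1,m}\bigr),
\]
with $\Lambda = \Lambda^* \in M_k$ and $A_m \in M_k$. Projecting onto the $m$-th copy of $M_2$ and permuting rows and columns gives the $2k \times 2k$ block matrix $\bigl(\begin{smallmatrix} \Lambda & A_m \\ A_m^* & \Lambda \end{smallmatrix}\bigr)$, so $S$ is positive in $M_k(\cl U_n)$ iff $\Lambda \ge 0$ and this block matrix is positive for each $m$. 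Since $\phi(I)=1$ and $\phi(E_{1,2,m})=x_m$ force $\phi(E_{2,1,m})=x_m^*$ by self-adjointness, one has $\phi_k(S) = \Lambda \otimes 1 + \sum_m \bigl(A_m \otimes x_m + A_m^* \otimes x_m^*\bigr)$, and $\phi$ is $k$-positive iff $\phi_k(S) \ge 0$ in $M_k(\cl A)$ for every such $S$.

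Next I would settle the case $\Lambda = I_k$, where the block condition reduces to $\|A_m\| \le 1$. For all contractions $A_m \in M_k$, positivity of $I_k \otimes 1 + \sum_m \bigl(A_m \otimes x_m + A_m^* \otimes x_m^*\bigr)$ is equivalent, after replacing $A_m$ by $\lambda A_m$ for each $|\lambda|=1$ and using the identity $w(T) = \sup_{|\lambda|=1}\|\re(\lambda T)\|$, to $w\bigl(\sum_m A_m \otimes x_m\bigr) \le 1/2$. Taking the supremum over contractions $A_m$ gives the equivalence with $w_k(x_1,\dots,x_n) \le 1/2$. This already handles the ``only if'' direction: $k$-positivity applied at $\Lambda = I_k$ produces $w_k \le 1/2$.

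For the ``if'' direction I would reduce the general $\Lambda \ge 0$ case to the $\Lambda = I_k$ case by a Schur complement. Setting $\Lambda_\epsilon := \Lambda + \epsilon I_k$ and $C_m := \Lambda_\epsilon^{-1/2} A_m \Lambda_\epsilon^{-1/2}$, the block-positivity forces $\|C_m\| \le 1$ and
\[
 \Lambda_\epsilon \otimes 1 + \sum_m \bigl(A_m \otimes x_m + A_m^* \otimes x_m^*\bigr) = \bigl(\Lambda_\epsilon^{1/2} \otimes 1\bigr)\Bigl[I_k \otimes 1 + \sum_m \bigl(C_m \otimes x_m + C_m^* \otimes x_m^*\bigr)\Bigr]\bigl(\Lambda_\epsilon^{1/2} \otimes 1\bigr).
\]
Under the hypothesis $w_k(x_1,\dots,x_n) \le 1/2$, the bracketed middle factor is positive by the previous paragraph, so the full expression is; letting $\epsilon \to 0$ removes the perturbation. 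The only real bookkeeping step is the matrix identification of $M_k(\cl U_n)^+$; after that the proof is a clean Schur complement paired with the standard phase-absorption characterization of the numerical radius, and no serious obstacle arises.
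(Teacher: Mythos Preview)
Your proposal is correct and follows essentially the same route as the paper's proof: both identify the positive cone of $M_k(\cl U_n)$ via the block matrices $\bigl(\begin{smallmatrix}\Lambda & A_m\\ A_m^* & \Lambda\end{smallmatrix}\bigr)$, handle the $\Lambda = I_k$ case by the phase-rotation characterization of the numerical radius, and reduce the general $\Lambda \ge 0$ case by conjugating with $(\Lambda+\epsilon I_k)^{-1/2}$ and letting $\epsilon \to 0$. The only cosmetic difference is that you prove the $k$-positive statement first and deduce the UCP case as $k\to\infty$, whereas the paper argues the UCP case directly and then remarks that restricting to $p\le k$ gives the $k$-positive version.
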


\begin{proof}
We may assume that $\cl A \subset \cl B(H)$. Assume that $w_{cb}(x_1,\dots,x_n) \le \frac12$,
Evidently the map $\phi$ is determined, as $\phi(I) = 1$ and $\phi(E_{2,1,i}) = x_i^*$.
Suppose that $p\ge1$ and 
\[
 X = I_p \otimes I + \sum_{i=1}^n A_i \otimes E_{1,2,i} + A_i^* \otimes E_{2,1,i} \ge 0  \quad\text{in } M_p \otimes \cl U_n .
\]
This holds precisely when
\[
\begin{bmatrix}I_p & A_i \\ A_i^* & I_p \end{bmatrix} \ge \begin{bmatrix}0_p & 0_p\\ 0_p & 0_p \end{bmatrix} \qfor 1 \le i \le n,
\]
which is equivalent to $\|A_i\| \le 1$ for $1 \le i \le n$. 
Then for any unit vector $\xi \in \bb C^p \otimes H$,
\begin{align*}
 \ip{ \phi_p(X) \xi, \xi } &= 1 + \Bip{ \big( \sum_{i=1}^n A_i \otimes x_i \big) \xi, \xi } + \Bip{ \xi, \big(\sum_{i=1}^n A_i \otimes x_i^*\big) \xi} \\&
 = 1 + 2 \re \Bip{ \big( \sum_{i=1}^n A_i \otimes x_i \big) \xi, \xi } \ge 1 - 1 = 0 .
\end{align*}
Thus $\phi_p(X) \ge 0$.

Now if $X = A_0 \otimes I + \sum_{i=1}^n A_i \otimes E_{1,2,i} + A_i^* \otimes E_{2,1,i} \ge 0$ in $M_p \otimes \cl U_n$ and $\ep > 0$,
then 
\[
 Y_\ep := (A_0 + \ep I_p)^{-1/2} ( X  + \ep I_p \otimes I ) (A_0 + \ep I_p)^{-1/2}
\]
has the form of the previous paragraph, and hence $\phi_p(Y_\ep) \ge 0$. Therefore
\[
 \phi_p(X  + \ep I_p \otimes I) = \big((A_0 + \ep I_p)^{1/2}\otimes I\big) \phi_p(Y_\ep) \big((A_0 + \ep I_p)^{1/2}\otimes I \ge 0\big).
\]
Letting $\ep \to 0$ shows that $\phi_p(X) \ge 0$. Therefore $\phi$ is completely positive.

Conversely, if $\phi$ is UCP, then the calculation in the first paragraph shows that 
\[
 \re \Bip{ \big( \sum_{i=1}^n A_i \otimes x_i \big) \xi, \xi } \ge -\frac12
\]
whenever $\|A_i\| \le 1$.
We can multiply each $A_i$ by a scalar of modulus 1 to deduce that
\[
 \Big|\Bip{ \big( \sum_{i=1}^n A_i \otimes x_i \big) \xi, \xi }\Big| \le \frac12.
\]
That is,  $w_p(x_1,\dots,x_n) \le \frac12$ for $p \ge 1$.

Now to consider the $k$-positivity of $\gamma$, it suffices to consider $p\le k$.
So as above, this comes down to the condition $w_k(x_1,\dots,x_n) \le \frac12$.
\end{proof}

We are now in a position to give a new proof of S. Wasserman's result.

\begin{thm} \label{newproofWasserman} 
For $n \ge 2$, 
\[
 d_{\infty}(\cl U_n) \ge \frac{n}{\sqrt{2n-1}}>1 ,
\]
and consequently, $C^*(\bb F_n)$ is not exact for $n \ge 2$.
\end{thm}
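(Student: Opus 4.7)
The plan is to use Lemma~\ref{L:ucp from wcb} in reverse: to prove $d_\infty(\cl U_n) \ge n/\sqrt{2n-1}$, it suffices to exhibit, for each $k$, an $n$-tuple $(x_1^{(k)},\dots,x_n^{(k)})$ in some C*-algebra $\cl A_k$ with $w_k(x_1^{(k)},\dots,x_n^{(k)}) \le \tfrac12$, such that the associated unital $k$-positive map $\phi_k: \cl U_n \to \cl A_k$ defined by $\phi_k(E_{1,2,i}) = x_i^{(k)}$ satisfies $\|\phi_k\|_{cb} \ge n/\sqrt{2n-1} - \epsilon$. The two essential ingredients will be (i) Kesten's theorem in the form just established in the previous proposition, giving $w_{cb}(\lambda(g_1),\dots,\lambda(g_n)) = \sqrt{2n-1}$ in $C^*_\lambda(\bb F_n)$, so that the scaling $\lambda(g_i)/(2\sqrt{2n-1})$ is the tightest that forces the associated map to be UCP; and (ii) the $2 \times 2$ matrix trick, i.e., the identity $w\bigl(\begin{smallmatrix}0 & A \\ 0 & 0\end{smallmatrix}\bigr) = \|A\|/2$ together with $\bigl\|\bigl(\begin{smallmatrix}0 & A \\ 0 & 0\end{smallmatrix}\bigr)\bigr\| = \|A\|$, which separates the numerical-radius scale from the operator-norm scale by a factor of two.

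The construction proceeds by placing off-diagonal $M_2$-dilations of appropriately scaled $\lambda(g_i)$'s (or $g_i$'s in $C^*(\bb F_n)$) into the target. By the $2 \times 2$ trick, $w_k$ of such off-diagonal elements equals $\tfrac{1}{2}\sup_{U_i \in M_k}\bigl\|\sum U_i \otimes (\cdot)\bigr\|$, and Kesten's bound via Proposition~\ref{wcb=wsum} controls this precisely. The cb-norm of $\phi_k$ is then estimated from below by evaluating $\phi_k$ on elements of $M_p(\cl U_n)$ of the form $\sum_i A_i \otimes E_{1,2,i} + A_i^* \otimes E_{2,1,i}$ (whose norm in $M_p(\cl U_n)$ equals $\max_i \|A_i\|$ because $\cl U_n \subseteq M_2 \oplus \cdots \oplus M_2$), producing images whose norm captures the amplification from $\sqrt{2n-1}$ (the numerical-radius side, coming from $C^*_\lambda(\bb F_n)$) up to $n$ (the operator-norm side, available via the trivial representation of $C^*(\bb F_n)$).

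The ratio $n/\sqrt{2n-1}$ appears exactly as the mismatch $\|\sum(g_i + g_i^*)\|_{C^*(\bb F_n)} = 2n$ versus $\|\sum(\lambda(g_i) + \lambda(g_i^*))\|_{C^*_\lambda(\bb F_n)} = 2\sqrt{2n-1}$, halved on both sides. The $2 \times 2$ trick is what lets this mismatch be converted into a genuine cb-norm bound for $\phi_k$ while the Lemma keeps the UkP condition tied only to the Kesten estimate on the reduced side.

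The main obstacle is the bookkeeping in arranging the construction so that the UkP condition involves only the $C^*_\lambda$-side (Kesten's $\sqrt{2n-1}$) while the cb-norm lower bound captures the full $C^*(\bb F_n)$-side value $n$; naive attempts (scaling $\lambda(g_i)$ directly in $C^*_\lambda(\bb F_n)$) are defeated by Fell absorption, which forces $w_k = w_{cb}$ there, so the $2 \times 2$ dilation is essential for producing a strict separation. Once $d_\infty(\cl U_n) \ge n/\sqrt{2n-1}$ is established, the quantity is strictly greater than $1$ whenever $n^2 > 2n-1$, i.e., for all $n \ge 2$, and Theorem~\ref{F_n-exact-equivalences} (equivalence of (1) and (5)) immediately yields that $C^*(\bb F_n)$ is not exact.
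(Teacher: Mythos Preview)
Your proposal has a genuine gap: you never actually construct the tuples $(x_1^{(k)},\dots,x_n^{(k)})$, and the construction you sketch does not work. Concretely, if $x_i = \begin{pmatrix}0 & y_i \\ 0 & 0\end{pmatrix}$ then, as you note, $w_k(x_1,\dots,x_n) = \tfrac12 \sup_{\|U_i\|\le 1,\,U_i\in M_k}\bigl\|\sum_i U_i\otimes y_i\bigr\|$. But for $y_i = \lambda(g_i)$ this supremum is already attained with scalar $U_i$'s (it equals $\|\sum_i \lambda(g_i)\| = 2\sqrt{n-1}$ for every $k\ge 1$), and for $y_i = g_i\in C^*(\bb F_n)$ it equals $n$ for every $k\ge 1$ (via the trivial representation). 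In either case $w_k = w_{cb}$, so the associated map is already UCP and has cb-norm $1$. Proposition~\ref{wcb=wsum} controls $w_{cb}(\alpha_i\lambda(g_i))$, not the operator-norm quantity your $2\times 2$ trick produces, so invoking it here does not give the separation you need. More generally, there is no mechanism in your outline for making the {\UkP} condition ``see'' only the $C^*_\lambda$-side while the cb-norm ``sees'' the $C^*(\bb F_n)$-side: both are determined by the same fixed target algebra and the same elements $x_i$.

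The paper's proof is structurally different and supplies exactly the missing mechanism. It does not build the witnessing {\UkP} map directly; instead it starts from the UCP map $\phi:\cl U_n\to\cl R^\omega$ given by $\phi(E_{1,2,i})=\lambda(g_i)/(2\sqrt{2n-1})$ (using the trace-preserving embedding $\cl L(\bb F_n)\hookrightarrow\cl R^\omega$), invokes the existence of a unital $k$-positive self-adjoint lift $\psi:\cl U_n\to\ell^\infty(\bb N,\cl R)$, and then bounds $\|\psi\|_{cb}$ from below via the Wittstock decomposition $\psi=\psi^+-\psi^-$ together with a Choi-matrix/trace argument on each $M_2$-summand. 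The ``$2\times 2$ matrix trick'' in the paper is this last step---conjugating the positive $2\times 2$ Choi block by a unitary and summing entries after applying $\tau$ to force $\tau(P_j+Q_j)\ge 1/w$---not the identity $w\bigl(\begin{smallmatrix}0&A\\0&0\end{smallmatrix}\bigr)=\|A\|/2$. The quotient $\ell^\infty(\bb N,\cl R)\to\cl R^\omega$ is precisely what allows the {\UkP} condition (which lives downstairs, where Kesten's $\sqrt{2n-1}$ governs) to decouple from the cb-norm bound (which lives upstairs, where the trace forces the full $n$). Your outline omits all of this machinery, and without it the argument does not go through.
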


\begin{proof}
By the previous results, we have that
\[
 w_{cb}(\lambda(g_1),\dots, \lambda(g_n)) = w(\lambda(g_1)+ \cdots + \lambda(g_n) )= \sqrt{2n-1} =: w.
\]

Let $\cl R^{\omega}$ denote an ultrapower of the hyperfinite $II_1$-factor $\cl R$, and let 
\mbox{$\pi: \ell^{\infty}(\bb N, \cl R) \to \cl R^{\omega}$} denote the quotient map.  
Wassermann \cite{Wa1}*{Lemma, p.245} shows that the group von Neumann algebra $\cl L(\bb F_n)$ imbeds into $\cl R^\omega$
as a von Neumann subalgebra in a trace preserving manner. 
%Thus we may identify $C_{\lambda}^*(\bb F_n)$ with a unital C*-subalgebra of $\cl R^{\omega}$ in a trace preserving manner.

By Lemma~\ref{L:ucp from wcb}, we have a UCP map $\phi: \cl U_n \to \cl R^{\omega}$ with $\phi( E_{1,2,i}) = \frac{\lambda(g_i)}{2w}.$
Now by Kavruk \cite{Ka1} (see also \cite{Rob-Smith}), this map has a unital $k$-positive lifting $\psi: \cl U_n \to \ell^{\infty}(\cl R)$ that is self-adjoint.  
This map extends to a self-adjoint map on $M_2 \oplus \cdots \oplus M_2$ of the same cb-norm, which we still denote by $\psi$.  
Since $\ell^{\infty}(\cl R)$ is injective, let $\psi = \psi^+ - \psi^-$ be the Wittstock decomposition {\cite{Wittstock}} so that $\psi^{\pm}$ are CP and
\[ d_k(\cl U_n) \ge \| \psi \|_{cb} = \| \psi^+(1) + \psi^-(1)\|.\]

Now let $\phi^{\pm} = \pi \circ \psi^{\pm}$ where $\pi$ is the quotient map.  Let
\[ \begin{pmatrix} P_i^{\pm} & A_i^{\pm} \\ A_i^{\pm} & Q_i^{\pm} \end{pmatrix}\] 
be the Choi matrices of $\phi^{\pm}$ restricted to the i-th copy of $M_2$ so that
\[ A_i^+ - A_i^- = \lambda(g_i)/2w, \,\, \sum_i P_i^+ + Q_i^+ - P_i^- - Q_i^- = I.\]
Since
\[ \begin{pmatrix} P_i^- & - A_i^- \\ -A_i^- & Q_i^- \end{pmatrix} \ge 0,\]
we have that
\[ \begin{pmatrix} P_i^++ P_i^- & \lambda(g_i)/2w \\ \lambda(g_i)^*/2w & Q_i^+ + Q_i^- \end{pmatrix} \ge 0.\]
Conjugating by $-\lambda(g_i)$, applying the map $\id_2 \otimes \tau$ yields
\[ \begin{pmatrix} \tau(P_i^++P_i^-) & -1/2w \\ -1/2w. & \tau(Q_i^+ + Q_i^-) \end{pmatrix} \ge 0.\]
Using the fact that the sum of the entries of a positive matrix must be positive, it follows that
\[\tau(P_i^+ + P_i^- + Q_i^+ +Q_i^-) \ge 1/w.\]
Hence,
\[ d_k(\cl U_n) \ge \|\phi^+(I) + \phi^-(I) \| \ge \sum_i \tau(P_i^++P_i^- + Q_i^+ + Q_i^-) \ge \frac{n}{w}.\]
Since this is true for all k, we have that
\[
 d_{\infty}(\cl U_n) \ge \frac{n}{w} = \frac{n}{\sqrt{2n-1}}. \qedhere
\]
\end{proof}

\begin{remark} 
Kesten's result that for any group $G$ and symmetric susbset $S \subseteq G$ with $|S|=k$, $\| \sum_{s \in S} \lambda(g_s) \| \ge 2 \sqrt{k-1}$,  shows that for any group,
\[ w(\lambda(g_1)+ \cdots \lambda(g_n) ) \ge \sqrt{2n-1},\]
so we cannot improve the above lower bound by using other groups.
\end{remark}

The following now follows from another old result of Kesten \cite{Kesten2}.

\begin{thm}\label{T:nonamenable}
Let $G$ be a discrete group and let $g_1,\dots,g_n \in G$.  
Let $H$ be the subgroup generated by $\{ g_1,\dots,g_n \}$.
For $\alpha_i > 0$ and $A = \sum_{i=1}^n \alpha_i \lambda(g_i)$,  we have $w(A) = \sum_{i=1}^n \alpha_i$
if and only if $H$ is amenable.
\end{thm}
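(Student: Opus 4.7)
The plan is to reduce the statement to a direct application of Kesten's amenability criterion from \cite{Kesten2}, passing through two simple steps.

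First, I would use Proposition \ref{wcb=wre} to replace $w(A)$ with a norm. Since the coefficients $\alpha_i$ are non-negative, that proposition gives
\[
 w(A) = \|\Re A\| = \tfrac{1}{2}\Big\| \sum_{i=1}^n \alpha_i\big(\lambda(g_i) + \lambda(g_i^{-1})\big) \Big\|,
\]
so the claim becomes: $\Big\|\sum \alpha_i\big(\lambda(g_i) + \lambda(g_i^{-1})\big)\Big\| = 2\sum \alpha_i$ if and only if $H$ is amenable.

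Next, I would restrict from $G$ to $H$. The right cosets of $H$ partition $G$, giving the orthogonal decomposition $\ell^2(G) = \bigoplus_{Hg} \ell^2(Hg)$ into subspaces invariant under $\lambda_G(h)$ for $h \in H$. Each summand $\ell^2(Hg)$ is unitarily equivalent, via $\delta_{hg} \mapsto \delta_h$, to $\ell^2(H)$ carrying the left regular representation $\lambda_H$. Consequently
\[
 \Big\|\sum \alpha_i\big(\lambda_G(g_i)+\lambda_G(g_i^{-1})\big)\Big\|_{B(\ell^2(G))} = \Big\|\sum \alpha_i\big(\lambda_H(g_i)+\lambda_H(g_i^{-1})\big)\Big\|_{B(\ell^2(H))}.
\]

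Finally, I would invoke Kesten's theorem. Set $\mu = \sum_{i=1}^n \tfrac{\alpha_i}{2}(\delta_{g_i} + \delta_{g_i^{-1}})$, a finitely supported, symmetric, non-negative measure on $H$ whose support $\{g_i, g_i^{-1}\}$ generates $H$, with total mass $\sum \alpha_i$. Then $\lambda_H(\mu) = \Re A_H$, and Kesten's theorem characterizes amenability by the equality $\|\lambda_H(\mu)\| = \mu(H)$ for such a measure: the upper bound $\|\lambda_H(\mu)\| \le \sum \alpha_i$ is trivial, while the reverse inequality holds exactly when the trivial representation is weakly contained in $\lambda_H$, i.e., when $H$ is amenable. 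Combining the three steps yields the stated equivalence.

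There is no serious obstacle here: both the coset decomposition and Kesten's theorem are standard tools. The only care needed is to quote the non-probabilistic form of Kesten's theorem (allowing non-negative weights rather than a probability measure) and to make sure the generating hypothesis on $\{g_1,\dots,g_n\}$ is used precisely to satisfy the support condition in Kesten's criterion.
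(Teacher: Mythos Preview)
Your proposal is correct and follows essentially the same approach as the paper: reduce $w(A)$ to $\|\Re A\|$ via Proposition~\ref{wcb=wre}, then invoke Kesten~\cite{Kesten2}. The paper's proof is a two-line citation of exactly these ingredients; you simply make explicit the coset decomposition reducing $\lambda_G$ to $\lambda_H$ and the form of the symmetric measure, details the paper folds into its reference to Kesten.
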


\begin{proof}
By Proposition \ref{wcb=wre} , $w(A) = \| \Re A \|$.
Kesten \cite{Kesten2} showed that $\| \Re A \| = \sum \alpha_i$ if and only if $H$ is amenable.
\end{proof}

Note that the above result tells us that $H$ is non-amenable if and only if for any such $A$,  $w(A) < \sum_{i=1}^n \alpha_i$ if and only if for all such $A$, $w(A) < \sum_{i=1}^n \alpha_i$.

\begin{cor} 
Let $G$ be a discrete group and let $g_1,\dots, g_n \in G$, let $H$ be the subgroup generated by $\{ g_1,\dots, g_n \}$ and let $\alpha_i \ne 0$. Then there exist $P_1,\dots, P_{n+1} \in B(\ell^2(G))$ such that
\[
 \begin{pmatrix} P_1 & \alpha_1 \lambda(g_1) & 0 & \cdots & 0 \\ 
 \overline{\alpha_1} \lambda(g_1)^* & P_2 & \alpha_2 \lambda(g_2) & \ddots & \vdots \\
 0 & \overline{\alpha_2} \lambda(g_2)^* & \ddots & \ddots & 0 \\
 \vdots & \ddots & \ddots & P_{n} & \alpha_n \lambda(g_n) \\
 0 & \cdots & 0 & \overline{\alpha_n} \lambda(g_n)^* & P_{n+1} \end{pmatrix} 
\]
is a positive operator on $\bb C^{n+1} \otimes \ell^2(G)$
with $\|P_1+ \cdots P_{n+1} \| < \sum_{i=1}^n |\alpha_i|$ if and only if $H$ is non-amenable.
\end{cor}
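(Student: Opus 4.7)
The plan is to derive both implications directly from Theorem~\ref{T:nonamenable} (via its link to Kesten's norm bound on $\|\Re A\|$). After conjugating $M$ by a diagonal unitary on $\bb{C}^{n+1}$ of the form $\mathrm{diag}(e^{i\theta_1},\dots,e^{i\theta_{n+1}})$, the phases of the $\alpha_i$ may be shifted freely, so without loss of generality I take $\alpha_i > 0$ and work with $A := \sum_i \alpha_i \lambda(g_i)$.

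For the easier direction ($H$ \emph{amenable} implies no such $P_i$ exists), I test the positivity $\ip{Mv,v} \ge 0$ on vectors of the form $v = (z_1\eta, z_2\eta, \dots, z_{n+1}\eta)$ with $|z_i| = 1$ and $\eta \in \ell^2(G)$ a unit vector. The resulting scalar inequality, after minimizing the off-diagonal contributions over the $n$ free phases $w_i := z_{i+1}\bar z_i \in \mathbb T$, reads
\[
 \ip{\big(\textstyle\sum_i P_i\big)\eta, \eta} \;\ge\; 2 \sum_{i=1}^n \alpha_i\,|\ip{\lambda(g_i)\eta,\eta}|.
\]
Taking the supremum over $\eta$, and invoking the argument in Proposition~\ref{wcb=wsum} (replacing $\eta = \sum c_g \delta_g$ by $\eta' = \sum |c_g|\delta_g$) together with Proposition~\ref{wcb=wre}, I identify the right-hand sup with $2\|\Re A\|$, giving $\|\sum_i P_i\| \ge 2\|\Re A\|$. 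By Theorem~\ref{T:nonamenable}, amenability of $H$ forces $\|\Re A\| = \sum_i \alpha_i$, so $\|\sum_i P_i\| \ge 2\sum_i \alpha_i$, which rules out the strict inequality in the corollary.

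For the harder direction ($H$ \emph{non-amenable} implies existence), Theorem~\ref{T:nonamenable} yields the strict inequality $\|\Re A\| < \sum_i \alpha_i$. The plan is to produce operators $V_i : \ell^2(G) \to \cl K$ with $V_i^* V_{i+1} = \alpha_i \lambda(g_i)$ and $V_i^* V_j = 0$ for $|i-j| \ge 2$, then set $P_i := V_i^* V_i$: the Choi/Stinespring picture guarantees that the resulting tridiagonal matrix $(V_i^* V_j)_{i,j}$ is positive, with $\|\sum_i P_i\| = \|V\|^2$ where $V\xi := (V_1\xi,\dots,V_{n+1}\xi) \in \cl K^{n+1}$. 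The starting point is an Ando-style positive completion certifying $w(A) < \sum_i\alpha_i$, namely positive operators $P_0, Q_0$ with
\[
 \begin{pmatrix} P_0 & A \\ A^* & Q_0 \end{pmatrix} \ge 0
 \qand \|P_0+Q_0\| < 2\sum_i\alpha_i ,
\]
from which one splits the associated Stinespring dilation into $n$ pieces indexed by the summands $\alpha_i\lambda(g_i)$, arranged along mutually orthogonal subspaces of the dilation space, to assemble the desired $V_i$.

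The main obstacle is achieving the sharp norm bound on $\|\sum_i P_i\|$ required by the corollary. A naive scalar ansatz $V_i = c_i\,\lambda(s_i)$ with $s_i := g_{i-1}^{-1}\cdots g_1^{-1}$ only yields $\|\sum_i P_i\| = \sum_i c_i^2 \ge 2\sum_i \alpha_i$ via AM--GM, regardless of the group; any improvement must therefore use the spectral data of $\Re A$ in a more refined way. I expect the correct construction to exploit an approximate eigenvector of $\Re A$ realizing the strict Kesten inequality $\|\Re A\| < \sum_i \alpha_i$ together with the regular-representation structure on $\ell^2(G)$, thereby turning the spectral gap at $\sum_i\alpha_i$ into an operator-norm gap for $\sum_i P_i$.
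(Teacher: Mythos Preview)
Your easier direction (amenable $\Rightarrow$ no such $P_i$) is a correct self-contained test-vector argument: testing positivity of $M$ on vectors $(z_1\eta,\dots,z_{n+1}\eta)$ and optimizing the phases gives $\langle (\sum_i P_i)\eta,\eta\rangle \ge 2\sum_i\alpha_i\,|\langle\lambda(g_i)\eta,\eta\rangle|$, and the positive-coefficient trick from Propositions~\ref{wcb=wsum}--\ref{wcb=wre} identifies the supremum of the right side with $2\|\Re A\|$. In the amenable case this yields $\|\sum_i P_i\|\ge 2\sum_i\alpha_i$, which certainly blocks the strict inequality in the statement. This is a more hands-on route than the paper's: the paper simply invokes \cite[Corollary~3.5]{FKP}, which identifies the minimal value of $\|\sum_i P_i\|$ over all positive tridiagonal completions with (a constant multiple of) $w_{cb}(\alpha_1\lambda(g_1),\dots,\alpha_n\lambda(g_n))$, and then appeals to Proposition~\ref{wcb=wre} and Theorem~\ref{T:nonamenable}.

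The genuine gap is in the harder direction. You propose to factor $M_{ij}=V_i^*V_j$ and to manufacture the $V_i$ by splitting a $2\times 2$ Ando-type completion of $A=\sum_i\alpha_i\lambda(g_i)$ along its summands, but you never carry this out; the closing paragraph (``I expect the correct construction to exploit\dots'') is a hope, not an argument. Nothing in your sketch explains how to enforce the orthogonality $V_i^*V_j=0$ for $|i-j|\ge2$ simultaneously with $V_i^*V_{i+1}=\alpha_i\lambda(g_i)$ while keeping $\|\sum_i V_i^*V_i\|$ controlled by $w_{cb}$ rather than by $\sum_i\alpha_i$; your own ``naive scalar ansatz'' already illustrates that straightforward choices do not achieve this. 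That construction is precisely the nontrivial content of \cite[Corollary~3.5]{FKP}, which the paper uses as a black box; without it (or an equivalent direct argument), the existence half of the corollary remains unproven.
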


\begin{proof}
This follows from Proposition~\ref{wcb=wre}, Theorem~\ref{T:nonamenable} and the characterization in \cite[Corollary~3.5]{FKP} of $w_{cb}(\alpha_1 \lambda(g_1),\dots, \alpha_n \lambda(g_n))$ as the minimal norm of the sum of the diagonal entries of such a positive operator matrix.
\end{proof}

\begin{remark}
Theorem~\ref{T:nonamenable} can be seen directly using other well-known characterizations of amenable groups.
For simplicity, we assume that $H=G$.
Indeed, one observes that $w(A) = \sum_{i=1}^n \alpha_i$ if and only if there are unit vectors $u_k$
so that $\ip{ \lambda(g_i) u_k, u_k} \to 1$ for $1 \le i \le n$.
This in turn is easily seen to be the same as $\| \lambda(g_i) u_k - u_k \| \to 0$.
From this, we see that $\| \lambda(g) u_k - u_k \| \to 0$ for all $g \in G$.
This says that $C^*_\lambda(G)$ has almost invariant vectors, which is well-known to be equivalent to amenability, 
e.g.\ see \cite[Theorem 9.13.11]{DavidsonFAOA}. 
\end{remark}

%%%%%%%%%%%%%%%%%%%%%
\section{Property LLP, Quasidiagonality and reduced group C*-algebras}
%%%%%%%%%%%%%%%%%%%%%

Many authors have written about connections between the LLP and quasidiagonality for reduced group C*-algebras.
The books of Brown-Ozawa\cite{Brown-Ozawa} and G. Pisier\cite{Pi} are an excellent source for references and many of the results relating these concepts.  In this section, we use the ideas of the previous section, in particular the joint cb-numerical radius, to derive some related results.

\begin{thm} \label{T:fail LP or not hyperlinear}
Let $\cl A$ be a unital C*-algebra containing unitaries $u_1, \dots, u_n$ such that
$w_{cb}(u_1,\dots,u_n) <n$ and let $\cl S = \spn \{ 1, u_1, \dots, u_n, u_1^*,  \dots, u_n^* \}$.
If $\cl S$ has the LP, then $\cl A$ cannot be embedded into $\cl R^{\omega}$ 
or into $\frac{\Pi_{n\in \mathbb{N }} M_n(\mathbb{C}) }{\oplus_{n\in \mathbb{N}} M_n (\mathbb{C})}$.
\end{thm}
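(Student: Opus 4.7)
The plan is to argue by contradiction, adapting the $2\times 2$ Choi matrix trick from the proof of Theorem~\ref{newproofWasserman} while exploiting the genuine UCP lifting provided by LP (this avoids the $k$-positive lifting and Wittstock decomposition step used there). Suppose that $\cl A$ embeds into $\cl R^\omega = \ell^\infty(\bb N,\cl R)/J$; the argument for $\prod_n M_n/\oplus_n M_n$ is entirely parallel, using on that quotient any tracial state built from the normalized matrix traces via an ultrafilter limit. Set $w := w_{cb}(u_1,\dots,u_n) < n$. The composition $\iota : \cl S \hookrightarrow \cl R^\omega$ is a UCP map into a C*-algebra quotient, so LP of $\cl S$ provides a UCP lift $\tilde\psi : \cl S \to \ell^\infty(\bb N,\cl R)$ with $\pi \circ \tilde\psi = \iota$.

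By Lemma~\ref{L:ucp from wcb}, the prescription $E_{1,2,i} \mapsto u_i/(2w)$ extends to a UCP map $\phi : \cl U_n \to \cl S$, and hence $\tilde\Phi := \tilde\psi \circ \phi : \cl U_n \to \ell^\infty(\bb N,\cl R)$ is UCP. Since $\ell^\infty(\bb N,\cl R)$ is injective (being a product of injective von Neumann algebras), Arveson's extension theorem extends $\tilde\Phi$ to a UCP map $\hat\Phi : M_2 \oplus \cdots \oplus M_2 \to \ell^\infty(\bb N,\cl R)$. Writing $P_i := \hat\Phi(E_{1,1,i})$, $Q_i := \hat\Phi(E_{2,2,i})$ and $A_i := \tilde\psi(u_i)/(2w)$, complete positivity of $\hat\Phi$ on the $i$-th copy of $M_2$ delivers
\[
 \begin{pmatrix} P_i & A_i \\ A_i^* & Q_i \end{pmatrix} \ge 0 \quad \text{in } M_2\bigl(\ell^\infty(\bb N,\cl R)\bigr),
\]
while unitality yields $\sum_{i=1}^n (P_i + Q_i) = 1$.

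Next, pushing these relations to $\cl R^\omega$ via $\pi$ and conjugating by the unitary $\operatorname{diag}(-u_i,\,1)\in M_2(\cl R^\omega)$ gives
\[
 \begin{pmatrix} u_i^* \pi(P_i) u_i & -1/(2w) \\ -1/(2w) & \pi(Q_i) \end{pmatrix} \ge 0.
\]
Applying $\id_2 \otimes \tau$, where $\tau$ denotes the trace on $\cl R^\omega$, and using $\tau(u_i^* \pi(P_i) u_i) = \tau(\pi(P_i))$ produces a positive scalar $2\times 2$ matrix with $\pm 1/(2w)$ off the diagonal; nonnegativity of the sum of its entries gives $\tau(\pi(P_i + Q_i)) \ge 1/w$ for each $i$. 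Summing over $i$ and invoking unitality produces $1 = \tau(\pi(1)) \ge n/w$, contradicting $w < n$.

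The main obstacle is organizing the compositions correctly: the assumption is LP of $\cl S$, not of $\cl U_n$ (the latter would imply exactness of $C^*(\bb F_n)$), so one must lift through $\cl S$ and only afterwards transfer to the Choi-matrix picture via the map $\phi : \cl U_n \to \cl S$ furnished by Lemma~\ref{L:ucp from wcb}. The fact that $\tilde\psi$ is an honest UCP lift (rather than merely $k$-positive) is precisely what allows one to bypass the Wittstock decomposition of Theorem~\ref{newproofWasserman} and work directly with the single positive Choi matrix.
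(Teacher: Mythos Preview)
Your argument is correct and follows essentially the same route as the paper's own proof: invoke LP of $\cl S$ to lift the embedding $\cl S\hookrightarrow\cl R^\omega$, precompose with the UCP map $\cl U_n\to\cl S$ from Lemma~\ref{L:ucp from wcb}, extend to $M_2^{\oplus n}$ by injectivity, and run the $2\times2$ Choi-matrix/trace trick to force $1\ge n/w$. The only cosmetic difference is that you define $P_i,Q_i$ upstairs in $\ell^\infty(\bb N,\cl R)$ and then apply $\pi$, whereas the paper places them directly in $\cl R^\omega$; the computations coincide once you push down.
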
 

\begin{proof} 
Set $w_{cb}(u_1,\dots,u_n) = w <n$. Let $\pi: \ell^{\infty}(\bb N, \cl R) \to \cl R^{\omega}$ be the quotient map.  Assume that there exists a unital isometric *-homomorphism $\rho: \cl A \to \cl R^{\omega}$.  We will prove that the restriction of $\rho$ to $\cl S$ does not have a UCP lifting.

We have a UCP map $\Phi: \cl U_n \to \cl S$ with $\Phi(E_{1,2,i}) = \frac{u_i}{2w}.$ 
Since $\cl S$ has the LP there is a UCP map $\Psi: \cl S \to \ell^{\infty}(\bb N, \cl R)$ such that $\pi \circ \Psi = \rho_{\cl S}$.

Since $\ell^{\infty}(\bb N, \cl R)$ is an injective C*-algebra, the map $\Psi \circ \Phi: \cl U_n \to \ell^{\infty}(\bb N, \cl R)$, extends to a UCP map 
$\Gamma: M_2 \oplus \cdots \oplus M_2 \to \ell^{\infty}(\bb N, \cl R)$ and the map $\pi \circ \Gamma$ is a UCP extension of $\rho \circ \Phi$.

If we set $P_j = \rho \circ \Phi(E_{1,1,j})$ and $Q_j = \rho \circ \Phi(E_{2,2,j})$, then the complete positivity of $\rho \circ \Phi$ on the $j$-th copy of $M_2$ implies that the matrix
\[
 \begin{pmatrix} P_j & \frac{\rho(u_j)}{2w} \\ \frac{\rho(u_j^*)}{2w} & Q_j \end{pmatrix}
\]
is positive.  
Conjugating by $\begin{pmatrix} -\rho(u_j)^* & 0 \\0 & 1 \end{pmatrix}$, taking the  trace of each element and summing the entries of the matrix,  we have that $\tau(P_j+Q_j) \ge \frac{1}{w}$ and hence,
\[
 1= \tau(\pi \circ \Gamma(I))= \sum_{j=1}^n \tau(P_j + Q_j) \ge \frac{n}{w} >1.
\]

This contradiction, proves that the map $\rho \circ \Phi$ cannot have a UCP extension to the direct sum of the matrix algebras, and hence cannot have a lift. 

The proof that $\cl A$ cannot be embedded into $\frac{\Pi_{n\in \mathbb{N }} M_n(\mathbb{C}) }{\oplus_{n\in \mathbb{N}} M_n (\mathbb{C})}$ is identical using a canonical trace on the quotient defined by taking a Banach generalized limit of the normalized traces on the matrix algebras, i.e.,
\[ \tau((A_1, A_2,\dots) + \oplus_{n \in \bb{N}} M_n(\bb C)) = \operatorname{glim}_n \tau_n(A_n),\]
where $\tau_n: M_n(\bb C) \to \bb C$ is the normalized trace and $\operatorname{glim}$ is a Banach generalized limit on $\ell^{\infty}(\bb N)$. 
\end{proof}

\begin{remark} 
Using the same method as in the proof of \ref{newproofWasserman} we have that if $\cl A$ can be embedded into $\cl R^{\omega}$ or 
into the quotient $\frac{\Pi_{n\in \mathbb{N }} M_n(\mathbb{C}) }{\oplus_{n\in \mathbb{N}} M_n (\mathbb{C})}$, then
\[ d_{\infty}( \cl S) \ge \frac{n}{w_{cb}(u_1,\dots,u_n)}.\]
\end{remark}
 
\begin{cor} 
Let $\cl A$ be a unital C*-algebra containing unitaries $u_1, \dots, u_n$ such that
$w_{cb}(u_1,\dots,u_n) <n$, then $\cl A$ is not quasidiagonal.
\end{cor}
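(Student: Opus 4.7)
The plan is to mimic the argument in the proof of Theorem~\ref{T:fail LP or not hyperlinear} for the embedding into $Q := \prod_{k} M_k / \bigoplus_k M_k$, exploiting the fact that quasidiagonality of $\cl A$ supplies the required UCP lift for free, without assuming that $\cl S$ has the LP.

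First, I would recall the standard characterization of quasidiagonality: a unital C*-algebra $\cl A$ is quasidiagonal if and only if there exist positive integers $k_j \uparrow \infty$ and a sequence of UCP maps $\psi_j: \cl A \to M_{k_j}$ that are asymptotically isometric and asymptotically multiplicative. Equivalently, there is a faithful unital $*$-homomorphism $\rho: \cl A \to Q$ which admits a unital completely positive lift $\widetilde{\rho}: \cl A \to \prod_k M_k$ along the quotient map $\pi: \prod_k M_k \to Q$. I would simply cite this (e.g.\ from Brown--Ozawa) rather than prove it.

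Next, assuming $\cl A$ is quasidiagonal, I would set $w = w_{cb}(u_1,\dots,u_n) < n$ and apply Lemma~\ref{L:ucp from wcb} to produce the UCP map $\Phi: \cl U_n \to \cl S$ with $\Phi(E_{1,2,i}) = u_i/(2w)$. The composition $\widetilde{\rho} \circ \Phi : \cl U_n \to \prod_k M_k$ is UCP and lifts $\rho \circ \Phi$. Since $\prod_k M_k$ is injective, I would extend $\widetilde{\rho} \circ \Phi$ to a UCP map $\Gamma: M_2 \oplus \cdots \oplus M_2 \to \prod_k M_k$, so that $\pi \circ \Gamma$ is a UCP extension of $\rho \circ \Phi$ to the full direct sum.

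Finally, I would run the same trace calculation as in Theorem~\ref{T:fail LP or not hyperlinear}. Define the canonical trace $\tau$ on $Q$ by a Banach generalized limit of normalized traces on $M_{k_j}$, as in the last paragraph of that theorem. With $P_j = \pi\circ\Gamma(E_{1,1,j})$ and $Q_j = \pi\circ\Gamma(E_{2,2,j})$, positivity of the $2\times 2$ Choi block together with conjugation by $\operatorname{diag}(-\rho(u_j)^*,1)$ gives $\tau(P_j+Q_j) \ge 1/w$. Summing over $j$ yields $1 = \tau(\pi\circ\Gamma(I)) \ge n/w > 1$, the desired contradiction.

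The only non-routine ingredient is the characterization of quasidiagonality as existence of a UCP-liftable embedding into $\prod_k M_k/\bigoplus_k M_k$; everything else is a direct transcription of the argument already presented in Theorem~\ref{T:fail LP or not hyperlinear}. This is the step I would be most careful about citing precisely, since the rest of the proof becomes essentially automatic once this characterization is in hand.
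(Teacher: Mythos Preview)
Your proposal is correct and follows essentially the same approach as the paper: both use the characterization of quasidiagonality as the existence of a UCP-liftable embedding into $\prod_k M_k/\bigoplus_k M_k$ (citing Brown--Ozawa), and then observe that this built-in lift lets one rerun the trace argument of Theorem~\ref{T:fail LP or not hyperlinear} without any LP hypothesis. The paper compresses this into two sentences, while you spell out the steps, but the argument is the same.
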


\begin{proof} 
Recall that a C*-algebra is quasidiagonal if and only if there is a unital injective $*$-homomorphism 
$\cl A \rightarrow \frac{\Pi_{n\in \mathbb{N }} M_n(\mathbb{C}) }{\oplus_{n\in \mathbb{N}} M_n (\mathbb{C})}$ 
which admits a completely positive lift (see exercise 7.1.3 in \cite{Brown-Ozawa}). 
But by the above proof, the restriction of this inclusion to the operator system spanned by the unitaries does not lift.
\end{proof}

This leads to another proof of a result of J. Rosenberg \cite[Corollary~7.1.18]{Brown-Ozawa}:

\begin{cor} 
Let $G$ be a non-amenable discrete group, then $C_{\lambda}^*(G)$ is not quasidiagonal.
\end{cor}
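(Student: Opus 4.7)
The plan is to apply the preceding corollary to $\cl A = C^*_\lambda(G)$ by exhibiting finitely many unitaries in $C^*_\lambda(G)$ whose joint cb-numerical radius is strictly less than $n$.

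First I would reduce to a finitely generated situation. Because amenability is preserved under directed unions, a group is amenable if and only if every one of its finitely generated subgroups is amenable. Non-amenability of $G$ therefore produces elements $g_1, \dots, g_n \in G$ (with $n \ge 2$) such that the subgroup $H = \langle g_1, \dots, g_n \rangle \le G$ is itself non-amenable.

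Next I would apply Theorem~\ref{T:nonamenable} to these generators, taking $\alpha_1 = \cdots = \alpha_n = 1$. Non-amenability of $H$ gives
\[
 w\big( \lambda(g_1) + \cdots + \lambda(g_n) \big) < n,
\]
and Proposition~\ref{wcb=wre} then translates this into
\[
 w_{cb}(\lambda(g_1), \dots, \lambda(g_n)) < n.
\]
Since $\lambda(g_1), \dots, \lambda(g_n)$ are unitaries in $C^*_\lambda(G)$, the previous corollary immediately yields that $C^*_\lambda(G)$ is not quasidiagonal.

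There is essentially no obstacle to this plan: the heavy lifting has already been done in the preceding corollary, which converts the strict inequality $w_{cb} < n$ into failure of quasidiagonality via the lifting obstruction built around the operator system $\cl U_n$, and in Theorem~\ref{T:nonamenable}, which packages Kesten's amenability criterion. The only bookkeeping step is the reduction to a finitely generated non-amenable subgroup, which is standard.
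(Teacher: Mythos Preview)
Your proposal is correct and follows essentially the same route as the paper's proof: reduce to a finitely generated non-amenable subgroup, invoke Kesten's criterion (Theorem~\ref{T:nonamenable}) to get $w(\sum_i \lambda(g_i)) < n$, upgrade this to $w_{cb}(\lambda(g_1),\dots,\lambda(g_n)) < n$, and feed it into the preceding corollary. The only cosmetic difference is that the paper cites Proposition~\ref{wcb=wsum} rather than Proposition~\ref{wcb=wre} for the upgrade step, but since all coefficients here equal $1$ either proposition does the job.
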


\begin{proof} 
It is not hard to see that if $G$ is non-amenable, then some finitely generated subgroup $H$ must be non-amenable. 
Apply the last corollary to a generating set of $H$.
\end{proof}

\begin{cor} 
Let $n \ge 2$, and let 
\[
 \cl S_{\lambda}(n) := \spn \{ 1, \lambda(g_1), \dots, \lambda(g_n), \lambda(g_1)^*,\dots, \lambda(g_n)^* \} \subseteq C^*_{\lambda}(\bb F_n) .
\]
Then $\cl S_{\lambda}(n)$ fails to have the LP, and consequently $C^*_{\lambda}(\bb F_n)$ fails to have the LLP.
\end{cor}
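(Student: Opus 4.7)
The plan is to apply Theorem~\ref{T:fail LP or not hyperlinear} directly with $\cl A = C^*_\lambda(\bb F_n)$ and $u_i = \lambda(g_i)$ for $1\le i\le n$. Two hypotheses need checking. First, the proposition immediately preceding Lemma~\ref{L:ucp from wcb} records the identity $w_{cb}(\lambda(g_1),\dots,\lambda(g_n)) = \sqrt{2n-1}$, which came from Kesten's norm computation together with Proposition~\ref{wcb=wre}; since $\sqrt{2n-1} < n$ for every $n\ge 2$, the cb-numerical-radius hypothesis is met. Second, Wassermann's lemma (already invoked in the proof of Theorem~\ref{newproofWasserman}) embeds $\cl L(\bb F_n)$, and hence its C*-subalgebra $C^*_\lambda(\bb F_n)$, trace-preservingly into the ultrapower $\cl R^\omega$. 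Thus Theorem~\ref{T:fail LP or not hyperlinear} immediately yields that $\cl S_\lambda(n)$ fails the LP.

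For the LLP conclusion I would argue by contradiction. Writing $\cl R^\omega = \ell^\infty(\bb N,\cl R)/J$ for the usual ultrafilter ideal and letting $\pi$ denote the quotient map, the Wassermann embedding $\rho: C^*_\lambda(\bb F_n) \to \cl R^\omega$ is a UCP map into this quotient. If $C^*_\lambda(\bb F_n)$ had the LLP, then since $\cl S_\lambda(n)$ is a finite-dimensional operator subsystem of $C^*_\lambda(\bb F_n)$, the restriction $\rho|_{\cl S_\lambda(n)}$ would admit a UCP lift to $\ell^\infty(\bb N,\cl R)$ by definition of LLP. But the existence of such a lift is precisely the input from which the proof of Theorem~\ref{T:fail LP or not hyperlinear} derives the contradiction $1 \ge n/\sqrt{2n-1} > 1$; hence LLP of $C^*_\lambda(\bb F_n)$ fails as well.

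There is no real obstacle here beyond verifying the hypotheses, since Theorem~\ref{T:fail LP or not hyperlinear} together with the Kesten identity does all the work. The only subtle point worth flagging is that the step from \emph{$\cl A$ has LLP} to \emph{the restriction of $\rho$ to $\cl S_\lambda(n)$ lifts} is immediate from the definition of LLP used in this paper, precisely because the UCP map being restricted is the globally defined Wassermann embedding; no Arveson-type extension of an arbitrary UCP map out of $\cl S_\lambda(n)$ to $\cl A$ is needed, so the passage from LP of the subsystem to LLP of the ambient algebra is direct.
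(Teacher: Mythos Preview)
Your proposal is correct and follows essentially the same approach as the paper, which simply invokes Theorem~\ref{T:fail LP or not hyperlinear} together with the embedding of $C^*_\lambda(\bb F_n)$ into $\cl R^\omega$. Your treatment is in fact more careful than the paper's one-line proof: you explicitly verify the numerical-radius hypothesis via Kesten's computation and, for the LLP conclusion, correctly observe that the proof of Theorem~\ref{T:fail LP or not hyperlinear} actually shows the specific restriction $\rho|_{\cl S_\lambda(n)}$ fails to lift, so no Arveson-type extension is needed to pass from failure of LP for the subsystem to failure of LLP for the ambient algebra.
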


\begin{proof} 
This follows by Theorem \ref{T:fail LP or not hyperlinear} since $C^*_{\lambda}(\bb F_n)$ does embed into $\cl R^{\omega}$.
\end{proof}

Recall that a group $G$ is {\it hyperlinear} if $C^*_{\lambda}(G)$ embeds into $\cl R^{\omega}$. 

\begin{cor} \label{non-amenable and LP}
Let $G$ be a finitely generated non-amenable group with generators $g_1,\dots, g_n$ and 
let $\cl S= \spn \{ 1, \lambda(g_1),\dots, \lambda(g_n), \lambda(g_1)^*,\dots, \lambda(g_n)^* \} $ in $C^*_{\lambda}(G).$
Then either $\cl S$ fails the LP or $G$ is not hyperlinear. 
Consequently, if $G$ is non-amenable and hyperlinear, then $C^*_{\lambda}(G)$ fails to have the LLP. 
\end{cor}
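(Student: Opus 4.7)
The plan is to apply Theorem~\ref{T:fail LP or not hyperlinear} to the unitaries $u_i = \lambda(g_i)$ inside $\cl A = C^*_\lambda(G)$. The only hypothesis that needs checking is the strict inequality $w_{cb}(\lambda(g_1),\dots,\lambda(g_n)) < n$; once this is in hand, the theorem delivers the first assertion of the corollary verbatim, since hyperlinearity of $G$ is by definition the embeddability of $C^*_\lambda(G)$ into $\cl R^{\omega}$.

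To verify the inequality, first apply Proposition~\ref{wcb=wre} with all $\alpha_i = 1$ to obtain
\[
 w_{cb}(\lambda(g_1),\dots,\lambda(g_n)) = w\Big(\sum_{i=1}^n \lambda(g_i)\Big).
\]
Next, since $\{g_1,\dots,g_n\}$ generates all of $G$, the subgroup $H$ appearing in Theorem~\ref{T:nonamenable} is $G$ itself. Because $G$ is non-amenable, the nonequality half of that theorem (taking all $\alpha_i = 1$) yields $w(\sum_i \lambda(g_i)) < n$. Combining these two steps gives $w_{cb}(\lambda(g_1),\dots,\lambda(g_n)) < n$, so Theorem~\ref{T:fail LP or not hyperlinear} applies and produces the dichotomy: either $\cl S$ fails the LP, or $C^*_\lambda(G)$ does not embed into $\cl R^{\omega}$, i.e.\ $G$ is not hyperlinear.

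For the concluding sentence, suppose $G$ is both non-amenable and hyperlinear. The dichotomy just established forces $\cl S$ to fail the LP. Since $\cl S$ is a finite-dimensional operator subsystem of $C^*_\lambda(G)$, and the LLP of a unital C*-algebra implies the LP for each of its finite-dimensional operator subsystems (this is the same passage already exploited in the preceding corollary for $\bb F_n$, since the restriction of any UCP map on $C^*_\lambda(G)$ to $\cl S$ is itself an arbitrary UCP map out of $\cl S$ after an Arveson-extension step), we conclude that $C^*_\lambda(G)$ must fail the LLP.

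There is really no substantive obstacle here: the corollary is a direct repackaging of Theorems~\ref{T:fail LP or not hyperlinear} and~\ref{T:nonamenable} in the specific setting $\cl A = C^*_\lambda(G)$. The only small point requiring care is noting that the subgroup generated by the generators of a finitely generated group is the whole group, which reduces the hypothesis on $H$ in Theorem~\ref{T:nonamenable} to the assumed non-amenability of $G$.
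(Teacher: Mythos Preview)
Your proof is correct and follows essentially the same route as the paper's: use Theorem~\ref{T:nonamenable} to get $w(\sum_i \lambda(g_i))<n$ from non-amenability, pass to $w_{cb}$ via Proposition~\ref{wcb=wsum} (your citation of Proposition~\ref{wcb=wre} works equally well), and invoke Theorem~\ref{T:fail LP or not hyperlinear}. One small caveat on your parenthetical: Arveson extension into an arbitrary quotient $\cl A/J$ is not generally available, so that justification for ``LLP of the algebra $\Rightarrow$ LP of $\cl S$'' is imprecise; however this is moot here, since the non-liftable map on $\cl S$ produced in the proof of Theorem~\ref{T:fail LP or not hyperlinear} is already the restriction of the embedding $\rho:C^*_\lambda(G)\hookrightarrow\cl R^\omega$, and hence its failure to lift directly witnesses failure of LLP for $C^*_\lambda(G)$.
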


\begin{proof}
By Theorem~\ref{T:nonamenable}, $w(\lambda(g_1) + \cdots \lambda(g_n)) < n$.  
Hence, by Proposition~\ref{wcb=wsum}, we have $w_{cb}(\lambda(g_1),\dots, \lambda(g_n)) <n$. 
If $G$ is hyperlinear, then $\cl S$ fails the LP by Theorem~\ref{T:fail LP or not hyperlinear}.
\end{proof}

Another immediate consequence of Theorem~\ref{T:fail LP or not hyperlinear} is the following:

\begin{cor} \label{LP vs wcb in Romega}
Let $u_1,\dots,u_n$ be unitaries in $\cl R^\omega$, and let 
\[ \cl S = \spn \{ 1, u_1, \dots, u_n, u_1^*,  \dots, u_n^* \} .\]
If $w_{cb}(u_1,\dots,u_n) <n$, then $\cl S$ does not have LP.

If $\cl S$ is contained in any quasidiagonal C*-algebra and $w_{cb}(u_1,\dots,u_n) <n$, then $\cl S$ does not have LP.
\end{cor}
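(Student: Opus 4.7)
The plan is to invoke Theorem~\ref{T:fail LP or not hyperlinear} directly, taking $\cl A := C^*(u_1,\dots,u_n)$, the C*-subalgebra generated by the given unitaries inside the ambient C*-algebra. Both parts of the corollary then reduce to exhibiting a unital isometric $*$-homomorphism from $\cl A$ into one of the two ``universal'' quotients appearing in the hypothesis of Theorem~\ref{T:fail LP or not hyperlinear}.

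For the first statement, $\cl A \subseteq \cl R^{\omega}$ by construction, so the inclusion is the required embedding. The hypothesis $w_{cb}(u_1,\dots,u_n) < n$ is exactly the numerical radius assumption of Theorem~\ref{T:fail LP or not hyperlinear}; since $\cl A$ does embed into $\cl R^{\omega}$, the contrapositive of that theorem forces $\cl S$ to fail the LP.

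For the second statement, suppose $\cl S \subseteq \cl B$ with $\cl B$ quasidiagonal. Using the characterization of quasidiagonality recalled in the proof of the Rosenberg corollary above, $\cl B$ admits a unital injective $*$-homomorphism into $\prod_{n \in \bb N} M_n(\bb C)/\oplus_{n \in \bb N} M_n(\bb C)$. Restricting this embedding to the subalgebra $\cl A = C^*(u_1,\dots,u_n) \subseteq \cl B$ embeds $\cl A$ into the same matrix quotient, and Theorem~\ref{T:fail LP or not hyperlinear} again rules out the LP for $\cl S$.

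There is essentially no obstacle here beyond unpacking definitions. The one observation worth isolating is that both $w_{cb}(u_1,\dots,u_n)$ and the property of $\cl S$ having the LP are intrinsic to the operator system $\cl S$ (equivalently, to the elements $u_1,\dots,u_n$ themselves) and do not depend on the ambient C*-algebra in which we compute them; consequently, passing from $\cl R^{\omega}$ or $\cl B$ down to the generated subalgebra $\cl A$ costs nothing, and the corollary follows as a one-line consequence of Theorem~\ref{T:fail LP or not hyperlinear}.
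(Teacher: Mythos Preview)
Your proposal is correct and matches the paper's approach exactly: the paper gives no proof at all, simply labeling the statement an ``immediate consequence'' of Theorem~\ref{T:fail LP or not hyperlinear}, and what you have written is precisely the one-line unpacking of that claim. The only comment is that the quasidiagonality characterization you cite appears in the proof of the corollary immediately \emph{preceding} the Rosenberg corollary, not in the Rosenberg corollary itself; otherwise there is nothing to add.
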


%%%%%%%%%%%%%%%%%%%%%%
\section{Applications to Matrix Ranges}
%%%%%%%%%%%%%%%%%%%%%%

In this section we use matrix ranges to give a lower bound on $d_{\infty}(\cl S_n)$ and obtain some interesting bounds on distances between certain matrix ranges.
Let $\rom T=(T_1,\dots,T_d), T_i \in B(\cl H_1)$ and  let
$\cl S_{\mathrm{T}}= \spn \{ I, T_1,\dots,T_d, T_1^*,\dots,T_d^* \}$.

The {\bf joint n-th matrix range} of $\rom T$ is the set of d-tuples of $n \times n$ matrices
\[
 W^n(\mathrm{T})= \{ (\phi(T_1),\dots,\phi(T_n)): \phi \in UCP(\cl S_{\mathrm{T}}, M_n) \}
\] 
and the {\bf joint matrix range}
is the disjoint union over $n$ of the n-th matrix ranges,  $\cl W(\mathrm{T})= \cup_{n \in \bb N} W^n(\mathrm{T})$.
 
For those more familiar with the concept, these matrix ranges are the most general bounded, matrix convex sets on $\bb C^d$.
 
We let 
\[
 \rom T^{\kmax}:=(T_1^{\kmax},\dots,T_d^{\kmax}) \qand \rom T^{\kmin}:=(T_1^{\kmin},\dots, T_d^{\kmin})
\] 
denote the images of the generators $T_i$ in the operator system $\OMAX_k(\cl S_{\rom T})$ and $\OMIN_k(\cl S_{\rom T})$ respectively. 
Since we have UCP maps, $\OMAX_k(\cl S_{\rom T}) \to \cl S_{\rom T} \to \OMIN_k(\cl S_{\rom T})$, sending generators to generators, we have
\[
 W^n(\rom T^{\kmin}) \subseteq W^n(\rom T) \subseteq W^n(\rom T^{\kmax}) \quad \text{for all } n.
\] 
There is equality of these sets for $n \le k$, since these three operator systems have the same UCP maps into $M_n$ for $n \le k$.

As pointed out in \cite{PP} these larger and smaller sets correspond to two canonical constructions in the theory of matrix convex sets.  
Indeed, for $n >k$, $W^n(\rom T^{\kmax})$ equals
\[  
 \big\{ (B_1,\dots, B_d): (\phi(B_1),\dots, \phi(B_d)) \in W^k(\rom T) \FORAL \phi \in \UCP(M_n, M_k) \big\},
\]
while
\[
 W^n(\rom T^{\kmin}) = \big\{ (B_1,\dots,B_d): B_i = \phi \circ \psi(T_i) \big\},
\]
where $\psi: \cl S_{\rom T} \to  \oplus_{i=1}^m M_k$ and $\phi: \oplus_{i=1}^m M_k \to M_n$ are both UCP, and $m$ is arbitrary. In words, $W^n(\rom T^{\kmax})$ consists of the d-tuples of matrices such that every $k \times k$ compression belongs to $W^k(\rom T)$, while $W^n(\rom T^{\kmin})$ consists of the d-tuples that can be built from sets of elements of $W^k(\rom T)$ in a matrix convex fashion.

It is rare that one can say how far apart these matrix convex sets are, especially asymptotically as $k \to +\infty$. The major contribution of \cite {PP} was to relate whether or not the distances between these sets tended to 0, to whether or not the operator system $\cl S_{\rom T}$ was exact or had the LP.

The purpose of this section is to describe these matrix ranges in the case of the operator systems 
\[
 \cl S_d= \spn \{ 1, g_1,\dots,g_d, g_1^*,\dots,g_d^* \} \subseteq C^*(\bb F^d),
\]
and 
\[
 \cl S_d^d \simeq \cl U_d= \spn \{ I, E_{1,2,i}, E_{2,1,i}: 1 \le i \le d \} \subseteq \oplus_{i=1}^d M_2,
\]
and to say as much about the distance between the matrix ranges as possible.
 
\begin{prop} We have:
\begin{enumerate}
\item $W^n(g_1,\dots,g_d) = \{ (B_1,\dots,B_d) :  \|B_i \| \le 1 \},$
\item $W^n(E_{1,2,1},\dots,E_{1,2,d}) = \{ (A_1,\dots,A_d):  w_{cb}(A_1,\dots,A_d) \le 1/2 \},$
\item $W^n(E_{1,2,1}^{\kmax}, \dots, E_{1,2,d}^{\kmax})= \{ (A_1,\dots,A_d): w_k(A_1,\dots,A_d) \le 1/2 \},$
\item $W^n(g_1^{\kmin}, \dots, g_n^{\kmin}) = $\vspace{-1.6ex}
\begin{align*}\strut\qquad
  \big\{ (B_1,\dots,B_d): w \big( \sum_{i=1}^d B_i \otimes A_i \big) \le 1/2 \text{ whenever } w_k(A_1,\dots,A_d) \le 1/2 \big\}.
\end{align*}
\end{enumerate}
\end{prop}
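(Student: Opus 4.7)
The plan is to handle the four parts using Lemma~\ref{L:ucp from wcb} together with standard facts about $\OMIN_k$ and $\OMAX_k$; parts (1)--(3) go through quickly and part (4) reduces to a rotation argument in the numerical radius.

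For (1), I would invoke Arveson's extension theorem: any UCP map $\phi : \cl S_d \to M_n$ extends to a UCP (hence contractive) map on $C^*(\bb F_d)$, so $\|\phi(g_i)\| \le 1$. Conversely, given contractions $B_i \in M_n$, the standard Halmos unitary dilation combined with the universal property of $C^*(\bb F_d)$ produces a unital $*$-homomorphism on $\bb C^{2n}$ whose compression to $\bb C^n$ sends $g_i$ to $B_i$. Part (2) is immediate from Lemma~\ref{L:ucp from wcb} with $\cl A = M_n$. For (3), the defining universal property of $\OMAX_k$ identifies UCP maps $\OMAX_k(\cl U_d) \to M_n$ with unital $k$-positive maps $\cl U_d \to M_n$, and the $k$-positive half of Lemma~\ref{L:ucp from wcb} then identifies such maps (sending $E_{1,2,i}$ to $A_i$) with tuples satisfying $w_k(A_1,\dots,A_d) \le 1/2$.

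For (4), my plan is to work directly from the definition of $\OMIN_k$. An element $X = X_0 \otimes 1 + \sum_i (X_i \otimes g_i + X_i^* \otimes g_i^*)$ with $X_0 = X_0^* \in M_m$ is positive in $M_m(\OMIN_k(\cl S_d))$ if and only if $(\id_m \otimes \psi)(X) \ge 0$ in $M_m \otimes M_k$ for every UCP $\psi : \cl S_d \to M_k$; by (1), such $\psi$ are parametrized by tuples of contractions $C_i \in M_k$. After the perturbation/conjugation reduction of Lemma~\ref{L:ucp from wcb} to $X_0 = I_m$, positivity becomes $I_m \otimes I_k + 2\re(\sum_i X_i \otimes C_i) \ge 0$ for every contraction tuple $C$. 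Invariance of this family under $C_i \mapsto z C_i$ for $z \in \bb T$ then rephrases positivity as $w(\sum_i X_i \otimes C_i) \le 1/2$ for every such $C$, i.e., as $w_k(X_1,\dots,X_d) \le 1/2$. A UCP map $\phi$ with $\phi(g_i) = B_i$ sends the inequality to the same expression at the output level, and rotating $X_i \mapsto z X_i$ (which preserves $w_k$) converts $I + 2\re(\sum_i X_i \otimes B_i) \ge 0$ into $w(\sum_i X_i \otimes B_i) \le 1/2$. Swapping tensor factors and renaming $X_i \to A_i$ delivers the stated characterization of $W^n(g^{\kmin})$.

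The main obstacle I anticipate is twofold: verifying that the characterization of $M_m(\OMIN_k(\cl S_d))^+$ via UCP maps into $M_k$ applies cleanly in this matricial setting, and carrying out the perturbation reduction from arbitrary self-adjoint $X_0$ to $X_0 = I_m$ without loss; both should proceed essentially verbatim from the analogous steps in the proof of Lemma~\ref{L:ucp from wcb}, so the difficulty is careful bookkeeping rather than a genuine new ingredient.
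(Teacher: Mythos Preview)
Your proposal is correct and follows essentially the same route as the paper. For (1)--(3) you give the same arguments (just spelled out more fully for (1) and (3)), and for (4) both you and the paper characterize $M_m(\OMIN_k(\cl S_d))^+$ via UCP maps into $M_k$, reduce to the identity coefficient, and identify the resulting positivity condition with $w_k \le 1/2$; your rotation step makes explicit what the paper compresses into ``Now (4) follows.''
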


\begin{proof} 
The first statement is obvious.  
The second statement follows from Lemma~\ref{L:ucp from wcb}.
Likewise the third statement also follows from Lemma~\ref{L:ucp from wcb}.
 
To see the fourth statement note that 
\[
 Q:=I_p \otimes 1 + \sum_{i=1}^d A_i \otimes g_i^{\kmin} + \big( \sum_{i=1}^d A_i \otimes g_i^{\kmin} \big)^* \in M_p(\cl S_n^{\kmin})^+
\]
if and only if for every UCP $\phi: \cl S_n \to M_k,\, \phi(g_i) = B_i$, we have that
\[
 I_p \otimes I_k + \sum_{i=1}^d A_i \otimes B_i + \sum_{i=1}^d A_i^* \otimes B_i^* \ge 0.
\]
But such a map $\phi$ exists for every d-tuple of contractions in $M_k$; and so we see that $Q \in M_p(\cl S_n^{\kmin})^+$ 
if and only if $w_k(A_1,\dots,A_d) \le 1/2.$
Now (4) follows.  
\end{proof} 

\begin{remark} 
The above result can also be proven by using the fact that for any finite dimensional operator system, 
the matrix-ordered spaces and their matrix-ordered duals satisfy
\[
 \OMIN_k(\cl S)^d \simeq \OMAX_k(\cl S^d) \qand  \OMAX_k(\cl S)^d = \OMIN_k(\cl S^d) , 
\]
proving that the functional that is the support functional of the identity is an order unit, and chasing through the identifications.
\end{remark}

\begin{remark} 
We do not have useful explicit characterizations of the sets $W^n(g_1^{\kmax},\dots, g_n^{\kmax})$ and $W^n( E_{1,2,1}^{\kmin},\dots, E_{1,2,d}^{\kmin})$.
\end{remark}

In order to get a bound on the distance between these matrix ranges, we first consider the general case.

To this end let $\rom T= (T_1,\dots,T_d)$ be a tuple of operators in $B(\cl H_1)$
and $\mathrm{R}= (R_1,\dots, R_d) \in B(\cl H_2)$, set
$\cl S_{\mathrm{T}}= \spn \{ I, T_1,\dots,T_d, T_1^*,\dots,T_d^* \}$ and $\cl S_{\mathrm{R}} = \spn \{I, R_1,\dots,R_d, R_1^*, \dots,R_d^* \}$.

Given two sets $X,Y$ in a normed space we let
\[
 d_H(X,Y)=  \sup_{x \in X} \inf_{y \in Y} \|x - y\|, 
\]
denote the asymmetric Hausdorff distance. So  $X \subset Y$ implies $d_H(X,Y) =0$.

We define a norm on d-tuples of matrices by
\[
 \|(B_1,\dots,B_d) \| = \sum_i \|B_i\|,
\]
where $\|B_i\|$ is the usual C*-norm on matrices.

We define 
\[ d_H(\cl W(\mathrm{R}), \cl W(\mathrm{T})) = \sup_n d_H(W^n(\mathrm{R}), W^n( \mathrm{T})).\]

From this point on we assume that $\Dim(\cl S_{\mathrm{T}}) = \Dim(\cl S_{\mathrm{R}}) = 2d+1$ so that there is a well-defined linear map 
$\gamma: \cl S_{\mathrm{T}} \to \cl S_{\mathrm{R}}$ given by $\gamma(I) = I$, $\gamma(T_i) = R_i$ and $\gamma(T_i^*) = \gamma(R_i^*)$ for $1 \le i \le n$. 
Note that $\|\gamma\|_{cb} \ge \|\gamma(I)\| =1$.

\begin{prop} Let $\mathrm{T}, \mathrm{R}$ and $\gamma: \cl S_{\mathrm{T}} \to \cl S_{\mathrm{R}}$ be as above and assume that $\|T_i\|= \|R_i\|=1, 1 \le i \le d$. Define $\epsilon: \cl S_{\mathrm{T}} \to \ell^{\infty}_{2d+1}$ by sending each of the $2d+1$ basis elements to the basis elements, $e_i \in \ell^{\infty}_{2d+1}$.  Then
\[ \frac{\|\gamma\|_{cb} -1}{2 \|\epsilon\|_{cb}} \le d_H(\cl W(\mathrm{R}), \cl W(\mathrm{T})).\]
\end{prop}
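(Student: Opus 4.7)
The plan is to realize $\|\gamma\|_{cb}$ (approximately) as $\|\phi\circ\gamma\|_{cb}$ for some UCP $\phi:\cl S_{\mathrm{R}} \to M_n$, use the Hausdorff distance hypothesis to replace $\phi\circ\gamma$ by a nearby UCP map $\psi:\cl S_{\mathrm{T}}\to M_n$, and then control the error between them by factoring it through $\ell^\infty_{2d+1}$, which is precisely where $\epsilon$ enters. Note that $\gamma$ is unital and self-adjoint but not assumed CP, so the game is to quantify how non-CP it is.

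First, by Arveson's extension theorem and taking compressions $\phi_K(\cdot)=P_K(\cdot)P_K$ onto finite-dimensional subspaces $K\subset\cl H_2$, one has
\[
 \|\gamma\|_{cb} = \sup\{\|\phi\circ\gamma\|_{cb} : n\in\bb N,\ \phi:\cl S_{\mathrm{R}}\to M_n \text{ UCP}\}.
\]
Given $\eta>0$, choose such a $\phi$ with $\|\phi\circ\gamma\|_{cb}>\|\gamma\|_{cb}-\eta$, and set $A_i:=\phi(R_i)$, so $(A_1,\dots,A_d)\in W^n(\mathrm{R})$. Writing $\delta:=d_H(\cl W(\mathrm{R}),\cl W(\mathrm{T}))$, the hypothesis yields $(B_1,\dots,B_d)\in W^n(\mathrm{T})$ with $\sum_i\|A_i-B_i\|<\delta+\eta$; let $\psi:\cl S_{\mathrm{T}}\to M_n$ be a corresponding witnessing UCP map, so $\psi(T_i)=B_i$.

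Now consider the self-adjoint map $D:=\phi\circ\gamma-\psi:\cl S_{\mathrm{T}}\to M_n$. It satisfies $D(I)=0$, $D(T_i)=A_i-B_i$ and $D(T_i^*)=(A_i-B_i)^*$. Since the $2d+1$ generators of $\cl S_{\mathrm{T}}$ are linearly independent, $D$ admits the factorization $D=\tilde D\circ\epsilon$, where $\tilde D:\ell^\infty_{2d+1}\to M_n$ sends each basis vector to the image under $D$ of the corresponding generator. Hence
\[
 \|D\|_{cb}\le \|\tilde D\|_{cb}\cdot\|\epsilon\|_{cb}.
\]
Because $\ell^\infty_{2d+1}$ is commutative, any $X=\sum_j \Lambda_j\otimes e_j\in M_k(\ell^\infty_{2d+1})$ has $\|X\|=\max_j\|\Lambda_j\|$, from which the triangle inequality gives the crude but sufficient bound
\[
 \|\tilde D\|_{cb}\le \sum_j \|\tilde D(e_j)\|= 2\sum_{i=1}^d \|A_i-B_i\|<2(\delta+\eta).
\]

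Finally, since $\psi$ is UCP one has $\|\psi\|_{cb}=1$, and $\phi\circ\gamma=\psi+D$ gives
\[
 \|\gamma\|_{cb}-\eta<\|\phi\circ\gamma\|_{cb}\le 1+\|D\|_{cb}\le 1+2\|\epsilon\|_{cb}(\delta+\eta),
\]
so letting $\eta\to 0$ rearranges to the desired inequality. The step most at risk of friction is the very first one, the approximation of $\|\gamma\|_{cb}$ by cb-norms of compressions $\phi\circ\gamma$ into matrix algebras, because this is what legitimizes restricting attention to $M_n$-valued points of $W^n(\mathrm{R})$; however, this is a standard consequence of Stinespring/Arveson and presents no real obstacle.
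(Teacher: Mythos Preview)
Your proof is correct. Both arguments establish the same inequality $\|\gamma\|_{cb}\le 1+2\|\epsilon\|_{cb}\,d_H(\cl W(\mathrm{R}),\cl W(\mathrm{T}))$ via the same ingredients---matrix ranges and a triangle-inequality estimate controlled by $\epsilon$---but the organization is dual. The paper fixes a self-adjoint test element $Y=A_0\otimes I+\sum_i A_i\otimes T_i+\sum_i A_i^*\otimes T_i^*$ in $M_n(\cl S_{\mathrm{T}})$, expresses $\|\gamma_n(Y)\|$ as a supremum over $(B_1,\dots,B_d)\in\cl W(\mathrm{R})$, replaces each such $B$ by a nearby $C\in\cl W(\mathrm{T})$, and invokes $\epsilon$ through the coefficient bound $\max_i\|A_i\|\le\|\epsilon\|_{cb}\|Y\|$. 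You instead fix a UCP map $\phi$ into $M_n$ (i.e., a point of $W^n(\mathrm{R})$) nearly realizing $\|\gamma\|_{cb}$, replace it by a nearby $\psi$ coming from $W^n(\mathrm{T})$, and use $\epsilon$ via the factorization $\phi\circ\gamma-\psi=\tilde D\circ\epsilon$ together with the elementary estimate $\|\tilde D\|_{cb}\le\sum_j\|\tilde D(e_j)\|$. Your packaging avoids the reduction to self-adjoint elements and makes the role of $\epsilon$ as a coordinate chart more transparent; the paper's version keeps everything at the level of explicit norm computations. Neither gains a sharper constant.
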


\begin{proof}
We use the fact that $\|\gamma \|_{cb}$ is attained over self-adjoint elements \vspace{.25ex}
(to see this use that $X$ and $\begin{sbmatrix} 0 & X \\X^* & 0 \end{sbmatrix}$ have the same norm).  
Now the norm of a self-adjoint element in $M_n(\cl S_{\mathrm{T}})$ is of the form
\begin{multline*}
 \big\| A_0 \otimes I + \sum_{i=0}^{d}A_i \otimes T_i + \sum_{i=0}^d A_i^* \otimes T_i^* \big\| = \\
  \sup \big\{ \big\|A_0 \otimes I + \sum_{i=1}^d A_i \otimes B_i + \sum_{i=1}^d A^*_i \otimes B_i^* \big\| : (B_1,\dots, B_{n}) \in \cl W(\mathrm T) \big\}.
\end{multline*}
for some matrices $A_i \in M_n$ with $A_0=A_0^*$.

The map  $\epsilon: \cl S_{\mathrm T} \to \ell^{\infty}_{2n+1}$ given by $T_i \to e_i$ is completely bounded.
Thus if $A_i \in M_n, 0 \leq i \leq d$, then
\begin{align*}
 \max_{0 \leq i \leq d} \|A_i\| &= \big\| \epsilon \big( A_0 \otimes I+ \sum_{i=0}^d A_i \otimes T_i+ \sum_{i=1}^d A_i^* \otimes T_i^* \big) \big\| \\& 
 \leq \| \epsilon \|_{cb} \big\|A_0\otimes I+ \sum_{i=1}^d A_i \otimes T_i + \sum_{i=1}^d A_i^* \otimes T_i^* \big\| .
\end{align*}
Since $W({\mathrm R})$ and $\cl W({\mathrm T}))$ are compact, given any ${\mathrm B} \in W^n({\mathrm R})$, 
we can choose ${\mathrm C} \in W^n(\mathrm T)$ such that 
\[
 \sum_{i=1}^d \|B_i - C_i \| \leq d_H(\cl W({\mathrm R}), \cl W({\mathrm T})). 
\]

We have that
\begin{align*}
  \big\| A_0 &\otimes I_n + \sum_i A_i \otimes B_i+ \sum_i A_i^* \otimes B_i^* \big\| \\&
  \leq \big\| A_0 \otimes I_n + \sum_i A_i \otimes C_i + \sum_i A_i^* \otimes C_i^* \big\| \\&
  \quad + \big\| \sum_{i=1}^d A_i \otimes (B_i - C_i) + \sum_{i=1}^d A_i^* \otimes (B_i - C_i)^* \big\| \\&
  \leq \big\| A_0 \otimes I \!+\! \sum_i A_i \otimes T_i \!+\! \sum_i A_i^* \otimes T_i^* \big\| \!+\! 2 (\max_i \|A_i \|) d_H(\cl W({\mathrm R}), \cl W({\mathrm T})) \\&
  \leq \big(1 \!+\! 2\|\epsilon \|_{cb}\, d_H(\cl W({\mathrm R}), \cl W({\mathrm T})) \big) \ 
  \big\|A_0 \otimes I \!+\! \sum_i A_i \otimes T_i \!+\! \sum_i A_i^* \otimes T_i^* \big\|.
\end{align*}

Hence,  $\|\gamma\|_{cb} \le 1 + 2 \|\epsilon \|_{cb} d_H( \cl W({\mathrm R}), \cl W({\mathrm T}))$, so that
\[
 \frac{\|\gamma\|_{cb} -1}{2 \|\epsilon\|_{cb}} \le d_H ( \cl W({\mathrm R}), \cl W({\mathrm T})). \qedhere
\]
\end{proof}

\begin{thm} Let $E_{1,2,1},\dots, E_{1,2,n}$ be the canonical generators of $\cl U_n$.  We have that
\[
 d_H(\cl W(E_{1,2,1}^{\kmax},\dots,E_{1,2,n}^{\kmax}), \cl W(E_{1,2,1},\dots, E_{1,2,n})) \ge \frac{n- \sqrt{2n-1}}{2 \sqrt{2n-1}} .
\]
In particular, it is strictly greater than $\sqrt{\frac n 8} - \frac12$.
\end{thm}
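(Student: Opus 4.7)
The plan is to apply the previous proposition with $\mathrm T=(E_{1,2,1},\dots,E_{1,2,n})$ and $\mathrm R=(E_{1,2,1}^{\kmax},\dots,E_{1,2,n}^{\kmax})$, so that $\cl S_{\mathrm T}=\cl U_n$, $\cl S_{\mathrm R}=\OMAX_k(\cl U_n)$, and $\gamma:\cl U_n\to\OMAX_k(\cl U_n)$ is the identity on the underlying vector space. Both spaces have dimension $2n+1$, and for $k\ge 2$ the generators $E_{1,2,i}$ and $E_{1,2,i}^{\kmax}$ have norm $1$ in their respective systems, since $\OMAX_k$ preserves all matrix cones at levels $\le k$, in particular at level $2$, where the norm of a single non-self-adjoint element is computed. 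The proposition then supplies
\[
d_H(\cl W(\mathrm R),\cl W(\mathrm T))\ \ge\ \frac{\|\gamma\|_{cb}-1}{2\|\epsilon\|_{cb}},
\]
so the problem reduces to computing $\|\epsilon\|_{cb}$ and lower-bounding $\|\gamma\|_{cb}$.

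The computation $\|\epsilon\|_{cb}=1$ is routine from the concrete embedding $\cl U_n\subseteq M_2\oplus\cdots\oplus M_2$: a general element $X=A_0\otimes I+\sum_i A_i\otimes E_{1,2,i}+\sum_i B_i\otimes E_{2,1,i}\in M_p(\cl U_n)$ has norm $\max_j\bigl\|\bigl(\begin{smallmatrix}A_0&A_j\\B_j&A_0\end{smallmatrix}\bigr)\bigr\|$, and this block norm dominates each of $\|A_0\|,\|A_j\|,\|B_j\|$; hence $\|\epsilon_p(X)\|_\infty\le\|X\|$, while $\epsilon(I)=e_0$ gives the reverse. For $\|\gamma\|_{cb}$ I claim $\|\gamma\|_{cb}=d_k(\cl U_n)$: since the level-$k$ positive cones of $\cl U_n$ and $\OMAX_k(\cl U_n)$ agree, $\gamma$ is itself \UkP, so $\|\gamma\|_{cb}\le d_k(\cl U_n)$; conversely, the universal property of $\OMAX_k$ factors any \UkP{} map $\phi:\cl U_n\to\cl T$ as $\cl U_n\xrightarrow{\gamma}\OMAX_k(\cl U_n)\xrightarrow{\tilde\phi}\cl T$ with $\tilde\phi$ UCP, so $\|\phi\|_{cb}\le\|\gamma\|_{cb}$ and hence $d_k(\cl U_n)\le\|\gamma\|_{cb}$.

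The key remaining input is that $d_k(\cl U_n)\ge n/\sqrt{2n-1}$ for every finite $k$, not only in the $k\to\infty$ limit. This is already inside the proof of Theorem~\ref{newproofWasserman}: the Kavruk lift $\psi:\cl U_n\to\ell^\infty(\cl R)$ constructed there is $k$-positive, and its cb-norm is bounded below by the explicit trace computation $\|\psi^+(I)+\psi^-(I)\|\ge n/\sqrt{2n-1}$, with no passage to the limit. Assembling the pieces gives
\[
d_H(\cl W(\mathrm R),\cl W(\mathrm T))\ \ge\ \frac{n/\sqrt{2n-1}-1}{2}\ =\ \frac{n-\sqrt{2n-1}}{2\sqrt{2n-1}},
\]
as required. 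For the closing estimate, rewrite this as $\tfrac{n}{2\sqrt{2n-1}}-\tfrac12$ and compare with $\sqrt{n/8}-\tfrac12$; the strict inequality reduces to $\sqrt{2n}>\sqrt{2n-1}$, which is immediate. The main subtle step is the identification $\|\gamma\|_{cb}=d_k(\cl U_n)$ together with extracting that Theorem~\ref{newproofWasserman}'s lower bound is already valid at each fixed $k$; the other calculations are elementary.
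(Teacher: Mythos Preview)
Your proof is correct and follows essentially the same route as the paper: apply the preceding proposition with $\cl S_{\mathrm T}=\cl U_n$ and $\cl S_{\mathrm R}=\OMAX_k(\cl U_n)$, identify $\|\gamma\|_{cb}=d_k(\cl U_n)$, bound $\|\epsilon\|_{cb}\le 1$ from the explicit block structure, and invoke the per-$k$ lower bound $d_k(\cl U_n)\ge n/\sqrt{2n-1}$ established inside the proof of Theorem~\ref{newproofWasserman}. Your write-up is in fact slightly more careful than the paper's in two places: you justify the equality $\|\gamma\|_{cb}=d_k(\cl U_n)$ via the universal property of $\OMAX_k$, and you note the mild restriction $k\ge 2$ needed to guarantee $\|E_{1,2,i}^{\kmax}\|=1$.
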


\begin{proof}
We apply the above proposition to the case that $\cl S_T = \cl U_n$ with $T_i = E_{1,2,i}$ and
$\cl S_R = \OMAX_k(\cl U_n)$ with $R_i = T_i^{\kmax}$ the images of this basis and $\gamma_k: \cl U_n \to \OMAX_k(\cl U_n)$.  
We have shown that 
\[
 \| \gamma_k \|_{cb}= d_k(\cl U_n) \ge d_{\infty}(\cl U_n) \ge \frac{n}{\sqrt{2n-1}}. 
\]
Also,  
\begin{align*} 
 \big\| A_0 \otimes I + \sum_i A_i \otimes E_{1,2,i} &+ \sum_i A_i^* \otimes E_{1,2,i}^* \big\| = \\&
 =  \max_{i \ge 1} \big\| A_0 \otimes I + A_i \otimes E_{1,2,i} + A_i^* \otimes E_{1,2,i}^* \big\| \\&
 \ge  \max_{i \ge 0} \|A_i\| .
\end{align*}
Hence $ \|\epsilon\|_{cb} \le 1$.

Therefore, with $T_i = E_{1,2,i}$ for $1 \le i \le n$,
\begin{align*} 
 d_H( \cl W({\mathrm T}^{\kmax}), \cl W({\mathrm T})) &\ge \frac12 \Big(\frac{n}{\sqrt{2n-1}} - 1\Big) \\&
 =  \frac{n- \sqrt{2n-1}}{2\sqrt{2n-1}} > \sqrt{\frac n 8} - \frac12.
 \qedhere
\end{align*}
\end{proof}

\begin{cor} For every $n$ and $\epsilon >0$ there is a $p\ge1$ and matrices $A_1,\dots, A_n$ in $M_p$ such that
$w_k(A_1,\dots,A_n) =1$, and if $B_1,\dots, B_n \in M_p$ satisfy $w_{cb}(B_1,\dots, B_n) \le1$, then
\[
 \sum_{i=1}^n \|A_i - B_i \| \ge \frac{n- \sqrt{2n-1}}{\sqrt{2n-1}} - \epsilon.
\]
In particular, we can make this quantity greater than $\sqrt{\frac n 2} - 1$.
\end{cor}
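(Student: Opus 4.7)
The corollary is essentially a direct rescaling of the preceding theorem, converting its Hausdorff-distance bound into an explicit quantifier statement at a fixed level $p$. First I would fix $\epsilon > 0$ and use the definition $d_H(\cl W(\rom T^{\kmax}), \cl W(\rom T)) = \sup_p d_H(W^p(\rom T^{\kmax}), W^p(\rom T))$ together with the preceding theorem to extract some $p \ge 1$ and a tuple $(\tilde A_1,\dots,\tilde A_n) \in W^p(\rom T^{\kmax})$ (with $\rom T_i = E_{1,2,i}$) whose $\sum_i\|\cdot\|$-distance to $W^p(\rom T)$ strictly exceeds $\frac{n-\sqrt{2n-1}}{2\sqrt{2n-1}} - \epsilon/2$. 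By parts (2) and (3) of the proposition describing these matrix ranges, this reads $w_k(\tilde A_1,\dots,\tilde A_n) \le 1/2$ and that every tuple $(\tilde B_i)$ with $w_{cb}(\tilde B_i) \le 1/2$ satisfies $\sum_i \|\tilde A_i - \tilde B_i\| > \frac{n-\sqrt{2n-1}}{2\sqrt{2n-1}} - \epsilon/2$.

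Next I would rescale by $2$: setting $A_i := 2\tilde A_i$ gives $w_k(A_1,\dots,A_n) \le 1$, and for any $(B_1,\dots,B_n)$ with $w_{cb}(B_1,\dots,B_n) \le 1$ the tuple $(B_i/2)$ lies in $W^p(\rom T)$, so
\[
\sum_{i=1}^n \|A_i - B_i\| \;=\; 2 \sum_{i=1}^n \|\tilde A_i - B_i/2\| \;>\; \frac{n - \sqrt{2n-1}}{\sqrt{2n-1}} - \epsilon .
\]
If $c := w_k(A_1,\dots,A_n)$ is strictly less than $1$ (and positive, which we can arrange since the lower bound is strictly positive), replace each $A_i$ by $A_i/c$ to force $w_k = 1$ exactly; the lower bound only improves, because any $(B_i)$ with $w_{cb}(B_i) \le 1$ gives $w_{cb}(cB_i) \le c \le 1$, and so $\sum_i \|A_i/c - B_i\| = (1/c)\sum_i \|A_i - cB_i\| \ge (1/c)\bigl(\frac{n-\sqrt{2n-1}}{\sqrt{2n-1}} - \epsilon\bigr)$. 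The final ``in particular'' claim reduces to $\frac{n}{\sqrt{2n-1}} > \sqrt{n/2}$, equivalent after squaring to $2n^2 > n(2n-1)$, which is immediate for $n \ge 1$.

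There is no substantive obstacle, since the corollary is just a reformulation of the Hausdorff bound from the preceding theorem. The only delicate point is the renormalization step: one needs that the set $\{(B_1,\dots,B_n) : w_{cb}(B_1,\dots,B_n) \le 1\}$ is stable under multiplication by scalars in $[0,1]$, so that scaling the $A_i$'s up to bring $w_k$ to exactly $1$ does not weaken the lower bound on $\sum_i \|A_i - B_i\|$.
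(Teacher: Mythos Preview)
Your proposal is correct and follows essentially the same approach as the paper: the paper's proof is the one-liner ``recall that $W^p(E_{1,2,1}^{\kmax},\dots,E_{1,2,n}^{\kmax})$ consists of the tuples with $w_k \le 1/2$, which allows us to remove the $2$ from the denominator,'' and you have simply written out this rescaling argument in detail. Your explicit normalization step to force $w_k = 1$ exactly (rather than $\le 1$) is a point the paper's proof glosses over, and your justification that dividing by $c = w_k(A_1,\dots,A_n)$ only improves the lower bound is correct.
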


\begin{proof} 
Recall that $W^p(E_{1,2,1}^{\kmax},\dots, E_{1,2,n}^{\kmax})$ is the n-tuples of matrices satisfying $w_k(A_1,\dots,A_n) \le 1/2$, 
which allows us to remove the 2 from the denominator.
\end{proof}

We now turn our attention to $\cl S_n$.

\begin{thm} For $n \ge 2$, we have that
\[ d_{\infty}(\cl S_n) \ge 1/2.\]
\end{thm}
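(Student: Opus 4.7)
The plan is to adapt the Wittstock-decomposition scheme of Theorem~\ref{newproofWasserman} to $\cl S_n$, with the role of $\cl U_n$'s Choi-matrix calculation replaced by a direct $2{\times}2$ positivity argument for unitaries. Wassermann's trace-preserving embedding $\cl L(\bb F_n) \hookrightarrow \cl R^\omega$ restricts to a UCP map $\phi: \cl S_n \to \cl R^\omega$ with $\phi(g_i) = \lambda(g_i)$, which will play the role of the $\cl U_n \to \cl R^\omega$ map used before.

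First I would invoke Kavruk's lifting result (as in Theorem~\ref{newproofWasserman}) to produce a self-adjoint unital $k$-positive lift $\psi: \cl S_n \to \ell^\infty(\bb N, \cl R)$ of $\phi$, and use the injectivity of $\ell^\infty(\bb N, \cl R)$ to extend $\psi$ to a self-adjoint unital map $\tilde\psi: C^*(\bb F_n) \to \ell^\infty(\bb N, \cl R)$ with the same cb-norm. Wittstock's decomposition then yields $\tilde\psi = \tilde\psi^+ - \tilde\psi^-$ with $\tilde\psi^\pm$ completely positive and
\[
 d_k(\cl S_n) \;\geq\; \|\psi\|_{cb} \;=\; \|\tilde\psi^+(1) + \tilde\psi^-(1)\|.
\]

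Next I would apply the $2{\times}2$ positivity trick to each generator. Since $\begin{pmatrix} 1 & g_i \\ g_i^* & 1 \end{pmatrix} \ge 0$ in $M_2(C^*(\bb F_n))$, complete positivity of $\tilde\psi^\pm$ gives
\[
 \begin{pmatrix} \tilde\psi^\pm(1) & \tilde\psi^\pm(g_i) \\ \tilde\psi^\pm(g_i)^* & \tilde\psi^\pm(1) \end{pmatrix} \;\ge\; 0,
\]
so $\|\tilde\psi^\pm(g_i)\| \le \|\tilde\psi^\pm(1)\|$. Setting $\phi^\pm = \pi \circ \tilde\psi^\pm$ and using $\phi^+(g_i) - \phi^-(g_i) = \lambda(g_i)$, the triangle inequality combined with $\|\lambda(g_i)\| = 1$ produces
\[
 1 \;\le\; \|\phi^+(g_i)\| + \|\phi^-(g_i)\| \;\le\; \|\phi^+(1)\| + \|\phi^-(1)\|.
\]
Because $\phi^\pm(1) \ge 0$, we have $\|\phi^+(1) + \phi^-(1)\| \ge \max(\|\phi^+(1)\|, \|\phi^-(1)\|) \ge 1/2$, and since the quotient map has norm at most $1$, this lower bound lifts to $\|\tilde\psi^+(1) + \tilde\psi^-(1)\| \ge 1/2$. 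Taking the limit in $k$ gives $d_\infty(\cl S_n) \ge 1/2$.

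The main obstacle is the first step, where the lifting-plus-extension machinery must be carefully applied to a subsystem of $C^*(\bb F_n)$ rather than of a finite-dimensional algebra; the remainder is an elementary Kadison--Schwarz-type computation which avoids the trace argument of Theorem~\ref{newproofWasserman}. The factor $1/2$ reflects the absence of any rescaling: in the $\cl U_n$ case a normalization by $2w$ produced the stronger bound $n/\sqrt{2n-1}$, while here the unitaries $\lambda(g_i)$ are used without scaling, so the best the $2{\times}2$ trick alone yields is one half.
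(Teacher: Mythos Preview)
Your argument is formally correct and does establish the stated inequality, but by a route entirely different from the paper's: you adapt the Wittstock-decomposition scheme of Theorem~\ref{newproofWasserman} directly to $\cl S_n$, whereas the paper works through the matrix-range machinery of Section~4, constructing an explicit element $T=\sum_i A_i\otimes g_i$ whose norm in $\OMIN_k(\cl S_n)$ is at most $1$ while its norm $r$ in $\cl S_n$ is bounded below via the Hausdorff-distance corollary for $\cl U_n$.  The paper's route is less self-contained but ties the bound to the earlier quantitative estimates on $W^n(E_{1,2,i}^{\kmax})$ versus $W^n(E_{1,2,i})$; your route is short and parallels Theorem~\ref{newproofWasserman} cleanly.

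There is, however, a conceptual issue worth flagging: your $2{\times}2$ computation does no real work.  Since $\tilde\psi$ is unital, the Wittstock decomposition already gives $\tilde\psi^+(1)-\tilde\psi^-(1)=1$, hence $\tilde\psi^+(1)+\tilde\psi^-(1)\ge 1$ and $\|\psi\|_{cb}=\|\tilde\psi^+(1)+\tilde\psi^-(1)\|\ge 1$ immediately --- stronger than the $1/2$ you extract via $\|\lambda(g_i)\|=1$ and the triangle inequality.  More to the point, \emph{any} unital map has $\|\psi\|_{cb}\ge 1$, so the scheme ``pick a unital $k$-positive lift $\psi$ and bound $\|\psi\|_{cb}$ from below'' can never yield anything beyond the trivial $d_k(\cl S_n)\ge 1$ unless one finds a mechanism that forces $\|\psi\|_{cb}$ strictly above $1$.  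In Theorem~\ref{newproofWasserman} that mechanism is the \emph{trace}, which lets one sum the $n$ per-generator inequalities $\tau(P_j+Q_j)\ge 1/w$ to get $n/w>1$; your per-generator norm inequality $\|\phi^+(1)\|+\|\phi^-(1)\|\ge 1$ cannot be summed in the same way, so the argument stalls at the trivial bound.  Your closing diagnosis (``the absence of any rescaling'') therefore misses the real obstruction: what is missing is not a normalization but an additive device like the trace that aggregates information across all $n$ generators.
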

\begin{proof}
By the previous result (ignoring the arbitrarily small $\epsilon$) we have:for all $k\ge1$, there is $p \in \bb N$ and $A_1,\dots,A_n \in M_p$ 
with $w_k(A_1,\dots,A_n) \le 1/2$ such that
\[
 \sum_i \|A_i - C_i \| \ge \frac{n -\sqrt{2n-1}}{2\sqrt{2n-1}} \ \ \FORAL \mathrm C\in M_p^n \text{ with } w_{cb}(C_1,\dots,C_n) \le \tfrac12.
\]

We claim that $P= I_p \otimes 1 + \sum_i A_i \otimes g_i + \sum_i A_i^* \otimes g_i^* \in M_p(\OMIN_k(\cl S_n))^+$.
This will be true if and only if for every UCP $\gamma: \cl S_n \to M_k$ one has $id_p \otimes \gamma(P) \ge 0.$
But $\gamma$ is UCP iff $\gamma(1) = I_k$ and $\gamma(g_i) = B_i$ with $B_i \in M_k$ and $\|B_i \| \le 1$.  But for such a set of B's, we have that $w(\sum_i B_i \otimes A_i) \le 1/2$ and hence $id_p \otimes \gamma(P) \ge 0$.

Thus,   $P \in M_p(\OMIN_k(\cl S_n))^+$ as claimed.
Set $T = \sum_i A_i \otimes g_i$, we have shown that $I_p \otimes 1 + T + T^* \ge 0.$
Note that the same proof shows that $I_p \otimes 1 + e^{i \theta} T + e^{-i \theta} T^* \ge 0$ so that $w(T) \le 1/2.$
Hence,
\[ \frac{\|T\|_{\OMIN_k(\cl S_n)}}{2} \le w(T) \le 1/2 \implies \|T\|_{\OMIN_k(\cl S_n)} \le 1.\]

Now let. $r= \| T \|_{\cl S_n}$.  Then $2r(I_p \otimes 1) + T + T^* \in M_p(\cl S_n)^+$ which implies that
$w_{cb}(A_1/2r,\dots,A_n/2r) \le 1/2$. Setting $C_i = A_i/2r$ we have that
\[ \sum_i \|A_i - A_i/2r\| \ge \frac{n -\sqrt{2n-1}}{2\sqrt{2n-1}}. \]
Thus,
\[ (1-1/2r) \sum_i \|A_i\| \ge  \frac{n -\sqrt{2n-1}}{2\sqrt{2n-1}}:=c_n.\]

This implies that  
\[
 d_{\infty}(\cl S_n) \ge r \ge \frac{\sum_i \|A_i\|}{2(\sum_i \|A_i\| - c_n)} \ge \frac{\sum_i \|A_i\|}{2(\sum_i \|A_i\| - c_n)} .
\]
  
  Note that $\frac{t}{2(t- c_n)}$ is a decreasing function of $t$ and $t= \sum_i \|A_i\| \le n$.
  Hence,
  \[
   d_{\infty}(\cl S_n) \ge \frac{n}{2n- c_n} =\frac{2n\sqrt{2n-1}}{(4n+1)\sqrt{2n-1} -n} \ge \frac12.
   \qedhere
  \]
  \end{proof}
 
 \begin{remark} We conjecture that $d_{\infty}(\cl S_n)$ is on the order of $\sqrt{n}.$
 \end{remark}

 \section{Acknowledgements}
 We thank Mehrdad Kalantar, Tatiana Shulman and Ebrahim Samei for helpful discussions. 
 
%%%%%%%%%%%%%%%%%%%%%%

\end{document}